\documentclass[journal]{IEEEtran}

\usepackage{amsmath,amsthm}
\usepackage{amssymb}
\usepackage{mathrsfs}
\usepackage{amsfonts}
\usepackage{graphics}
\usepackage{float}
\usepackage{graphicx}
\usepackage{indentfirst}
\usepackage{enumerate}
\usepackage{caption}
\usepackage{cite}
\usepackage{arydshln}

\theoremstyle{definition}
\newtheorem{theorem}{Theorem}
\newtheorem{lemma}{Lemma}
\newtheorem{corollary}{Corollary}

\newtheorem{remark}{Remark}

\newtheorem{example}{Example}

\def\Ds{\displaystyle}

\makeatletter

\newcommand{\Rmnum}[1]{\expandafter\@slowromancap\romannumeral #1@}
\makeatother
\ifCLASSINFOpdf
  % \usepackage[pdftex]{graphicx}
  % declare the path(s) where your graphic files are
  % \graphicspath{{../pdf/}{../jpeg/}}
  % and their extensions so you won't have to specify these with
  % every instance of \includegraphics
  % \DeclareGraphicsExtensions{.pdf,.jpeg,.png}
\else
  % or other class option (dvipsone, dvipdf, if not using dvips). graphicx
  % will default to the driver specified in the system graphics.cfg if no
  % driver is specified.
  % \usepackage[dvips]{graphicx}
  % declare the path(s) where your graphic files are
  % \graphicspath{{../eps/}}
  % and their extensions so you won't have to specify these with
  % every instance of \includegraphics
  % \DeclareGraphicsExtensions{.eps}
\fi

\hyphenation{op-tical net-works semi-conduc-tor}

\begin{document}

\title{Distributed Averaging Problems over Directed Signed Networks}

\author{Mingjun~Du,~Deyuan~Meng,~\IEEEmembership{Senior~Member,~IEEE},~and~Zheng-Guang~Wu% <-this % stops a space

\thanks{Mingjun Du is with the School of Electrical Engineering and Automation, Qilu University of Technology (Shandong Academy of Science), Jinan, Shandong Province 250353, P. R. China. He was with the Seventh Research Division, Beihang University (BUAA), Beijing 100191, P. R. China (Email: dumingjun0421@163.com).}
\thanks{Deyuan Meng (corresponding author) is with the Seventh Research Division, Beihang University (BUAA), Beijing 100191, P. R. China, and also with the School of Automation Science and Electrical Engineering, Beihang University (BUAA), Beijing 100191, P. R. China (Email: dymeng@buaa.edu.cn).}
\thanks{Zheng-Guang. Wu is with the National Laboratory of Industrial Control Technology, Institute of Cyber-Systems and Control, Zhejiang University, Hangzhou 310027, P. R. China (Email: nashwzhg@zju.edu.cn).}}

\maketitle

\begin{abstract}
This paper aims at addressing distributed averaging problems for signed networks in the presence of general directed topologies that are represented by signed digraphs. A new class of improved Laplacian potential functions is proposed by presenting two notions of any signed digraph: induced unsigned digraph and mirror (undirected) signed graph, based on which two distributed averaging protocols are designed using the nearest neighbor rules. It is shown that with any of the designed protocols, signed-average consensus (respectively, state stability) can be achieved if and only if the associated signed digraph of signed network is structurally balanced (respectively, unbalanced), regardless of whether weight balance is satisfied or not. Further, improved Laplacian potential functions can be exploited to solve fixed-time consensus problems of signed networks with directed topologies, in which a nonlinear distributed protocol is proposed to ensure the bipartite consensus or state stability within a fixed time. Additionally, the convergence analyses of directed signed networks can be implemented with the Lyapunov stability analysis method, which is realized by revealing the tight relationship between convergence behaviors of directed signed networks and properties of improved Laplacian potential functions. Illustrative examples are presented to demonstrate the validity of our theoretical results for directed signed networks.
\end{abstract}

\begin{IEEEkeywords}
Directed topology, distributed averaging, fixed-time consensus, signed-average consensus, signed network.
\end{IEEEkeywords}

\IEEEpeerreviewmaketitle

\section{Introduction}

Networked systems generally consist of multiple interacting agents to deal with problems that are difficult to address for a single agent. One of the most considered classes of networked systems is called {\it unsigned networks} (also called conventional or traditional networks), where cooperative interactions among agents are only contained. To characterize unsigned networks, unsigned graphs can be conveniently employed such that their nodes and edges with positive weights are adopted to represent agents and cooperative interactions among agents, respectively. By leveraging this property, distributed control has attracted considerable attention in unsigned networks (see, e.g., \cite{Ren05,Cao08,Yu10,Yuce17,Abde18,Hong19,Qian19,Lv20}). In particular, consensus is one of the fundamental distributed control problems in unsigned networks such that all the agents cooperate with each other to accomplish a common objective.

Of specific interest in consensus is the average consensus of unsigned networks such that all agents converge to the average value of the initial states. As shown in the consensus literature, the average consensus generally can provide a solution for the distributed averaging problems on unsigned networks, which has been extensively investigated (see, e.g., \cite{Olfa04,Cai12,Prio14,Atri12,Cai14,Hadj14,Li16}). In \cite{Olfa04}, a distributed control protocol has been proposed to guarantee the average consensus of unsigned networks under unsigned digraphs that are both strongly connected and weight balanced. The average consensus problems of unsigned networks under any strongly connected topologies have been discussed in \cite{Cai12}, for which a consensus protocol is constructed by introducing a ``surplus'' variable in contrast to the protocol of \cite{Olfa04}. Regarding this problem, another new protocol has been designed by using the left eigenvector of the Laplacian matrix corresponding to the zero eigenvalue in \cite{Prio14}. For unsigned networks regardless of time-varying topologies, the average consensus problems have been explored in \cite{Atri12,Cai14}. Besides, a distributed control protocol has been proposed to guarantee the average consensus of unsigned networks with time-varying delays in \cite{Hadj14}. When considering unsigned networks subject to nonlinear dynamics, an unscented Kalman filter algorithm has been given to realize the average consensus objective in \cite{Li16}.

Recently, {\it signed networks} emerging from social networks have drawn great interests because of its potential applications (see \cite{Pros18} for more details). Different from unsigned networks, signed networks contain not only cooperative interactions but also antagonistic interactions, which should be described under signed digraphs such that the edges with positive (respectively, negative) weights can be leveraged to represent the cooperative (respectively, antagonistic) interactions among agents. Owing to the antagonistic interactions among agents, signed networks may naturally generate more plentiful collective behaviors than unsigned networks. In \cite{Alta13}, a basic framework of addressing distributed control issues on signed networks is established. It has been shown that, under strongly connected communication topologies, signed networks achieve bipartite consensus when they are associated with structurally balanced signed digraphs; and the state stability emerges, otherwise. Bipartite consensus means that all agents converge to two different values with the same modulus but opposite signs, and it includes consensus as a particular case. Additionally, not only has bipartite consensus been extended to signed networks with general linear dynamics \cite{Zhang17,Qin17,Hu18}, but also many other kinds of convergence behaviors of signed networks have been shown, such as bipartite consensus tracking \cite{Wen18,Yu19,Jiao19}, interval bipartite consensus \cite{Meng16,Xia16,Meng18}, modulus consensus \cite{Zi16,Anto16}, bipartite swarming behavior \cite{Hu19a}, and finite-time bipartite consensus \cite{Jia16,Zuo16,Zhao17,Wang18,Liu19,Lu20}.

However, there have been presented quite limited results for solving distributed averaging problems on signed networks. A main reason is that the average consensus may no longer work due to the presence of antagonistic interactions. To overcome this problem, distributed averaging of signed networks mainly concentrates on designing distributed protocols to accomplish signed-average consensus, instead of the average consensus. In \cite{Alta13}, it has been disclosed that signed networks can be driven to achieve the signed-average consensus when the associated signed digraphs are strongly connected and weight balanced. Nevertheless, it is worthwhile pointing out that the protocol of \cite{Alta13} is ineffective for the signed-average consensus once the weight balance is broken. To the best of our knowledge, there have been reported no investigations on distributed averaging problems of signed networks under any directed topologies.

In this paper, we contribute to solving distributed averaging problems for arbitrary directed signed networks in the presence of strongly connected communication topologies. We propose two notions of any signed digraph: induced unsigned digraph and mirror signed graph, with which we simultaneously introduce a new class of improved Laplacian potential functions to measure the total disagreements among agents. With the help of using improved Laplacian potential functions, we present two distributed protocols based on the nearest neighbor rules. Our two presented protocols can guarantee the signed-average consensus (respectively, state stability) of the directed signed network if and only if its associated signed digraph is structurally balanced (respectively, unbalanced), no matter whether the weight balance is satisfied or not. Besides, we can establish a relation between convergence behaviors of signed networks and improved Laplacian potential functions. This brings an advantage in developing the convergence analyses of our two presented protocols by the Lyapunov stability analysis method. In addition, we provide an application of improved Laplacian potential functions, based on which the fixed-time consensus problems can be solved for signed networks in the presence of directed topologies. We can introduce a nonlinear distributed protocol such that all agents reach the bipartite consensus (or state stability) within a fixed time. The associated convergence analyses can be developed by employing improved Laplacian potential functions. Two illustrative examples are exhibited to illustrate the effectiveness of our developed results.

The rest of paper is organized as follows. In Section \Rmnum{2}, the problem statement of distributed averaging control is provided for signed networks. In Section \Rmnum{3}, we propose some notions of signed digraphs, including induced unsigned digraph, mirror signed graph and improved Laplacian potential functions. In Section \Rmnum{4}, two distributed protocol are designed based on the nearest neighbor rules to ensure the signed-average consensus of directed signed networks. In Section \Rmnum{5}, a nonlinear protocol is proposed for directed signed networks and the associated fixed-time consensus results are derived. Simulation examples and conclusions are given in Sections \Rmnum{6} and \Rmnum{7}, respectively.

\emph{Notations}: For a positive integer $n$, $\mathcal{F}_n=\{1$, $2$, $\cdots$, $n\}$, $1_n=[1$, $1$, $\cdots$, $1]^{T}\in \mathbb{R}^n$, $0_n=[0$, $0$, $\cdots$, $0]^T\in \mathbb{R}^n$, and $\mathrm{diag}\{d_1$, $d_2$, $\cdots$, $d_n\}$ is diagonal matrix whose diagonal elements are $d_1$, $d_2$, $\cdots$, $d_n$ and non-diagonal elements are zero. For a matrix $M\in \mathbb{R}^{n\times n}$, $\det(M)$ and $\mathcal{N}(M)$ represent the determinant and null space of $M$, respectively. We denote $M>0$ (respectively, $M\geq0$) as the positive (respectively, semi-positive) definite matrix. For a real number $a\in \mathbb{R}$, let $|a|$ and $\mathrm{sgn}(a)$ represent the absolute value and the sign function of $a$, respectively. The set of all $n$-by-$n$ gauge transformations is given by
\begin{equation*}
\mathcal{D}=\{D_n=\mathrm{diag}\{\sigma_1,\sigma_2,\cdots,\sigma_n\}:\sigma_i \in \{\pm1\},i\in \mathcal{F}_n\}.
\end{equation*}

\section{Problem Statement}

Consider signed networks with a collection of $n$ agents given by $\mathcal{V}=\left\{v_{i}:i\in \mathcal{F}_{n}\right\}$. Let every agent $v_{i}$ have single-integrator dynamics described by
\begin{equation}\label{eq1}
\dot{x}_i(t)=u_i(t),\quad i\in \mathcal{F}_{n}
\end{equation}

\noindent where $x_i(t)\in \mathbb{R}$ and $u_i(t)\in \mathbb{R}$ are the information state and control protocol of $v_i$, respectively. The problem of our interest is to design protocols to achieve the distributed averaging of all agents in the presence of cooperative-antagonistic interactions among agents. For the convenience of our following analyses, we denote $x_{i}(0)\triangleq x_{i0}$, $\forall i\in\mathcal{F}_{n}$ and
\[
\aligned
x(t)&=\left[x_{1}(t),x_{2}(t),\cdots,x_{n}(t)\right]^{T}\\
x_{0}&=\left[x_{10},x_{20},\cdots,x_{n0}\right]^{T}.
\endaligned
\]

We say that the signed network (\ref{eq1}) achieves {\it signed-average consensus} if for any initial states $x_{i0}\in\mathbb{R}$, $\forall i\in\mathcal{F}_{n}$, there exist some scalars $\sigma_{i}\in\{\pm1\}$ such that
\begin{equation}\label{eq01}
\lim_{t\to\infty}x_{i}(t)=\frac{\Ds\sigma_{i}}{\Ds n}\sum_{j=1}^{n}\sigma_{j}x_{j0},\quad i\in \mathcal{F}_{n}
\end{equation}

\noindent where $\sigma_{i}$ assigns the sign to each agent $v_{i}$. It is worth pointing out that the signed-average consensus may reflect the effects of both cooperative and antagonistic interactions among agents by the selections of scalars $\sigma_{i}$, $\forall i\in\mathcal{F}_{n}$. This gives an alternative solution to the distributed averaging problem of agents involved in signed networks. In particular, if $\sigma_{i}=1$, $\forall i\in\mathcal{F}_{n}$, then the signed-average consensus collapses into the traditional average consensus of (unsigned) networks.

Another fact we need to highlight is that the signed-average consensus (\ref{eq01}) represents a specific type of {\it bipartite consensus}. In general, we say that the signed network (\ref{eq1}) reaches bipartite consensus if for any $x_{i0}\in\mathbb{R}$, $\forall i\in\mathcal{F}_{n}$, there exist some scalars $\sigma_{i}\in\{\pm1\}$ and $c\neq0$ such that (see also \cite[Definition 1]{Alta13})
\[\lim_{t\to\infty}x_{i}(t)=\sigma_{i}c,\quad i\in\mathcal{F}_{n}
\]

\noindent where $c$ depends closely on $x_{i0}$, $\forall i\in\mathcal{F}_{n}$. For signed networks, the {\it state stability} (or stability for short) is usually considered as a counterpart problem of bipartite consensus. Namely, for any $x_{i0}\in\mathbb{R}$, $\forall i\in\mathcal{F}_{n}$, the signed network (\ref{eq1}) is said to achieve state stability if
\[\lim_{t\to\infty}x_{i}(t)=0,\quad i\in\mathcal{F}_{n}.\]

Though many notable results have been derived for bipartite consensus of signed networks, none of them can be adopted to address the signed-average consensus problems in the presence of directed topologies. This issue will be solved in the current paper to achieve the distributed averaging of agents in directed signed networks. A new protocol design method for distributed averaging will be introduced simultaneously, for which an idea of a new class of improved Laplacian potential functions will be explored for a signed directed graph (digraph for short).

\begin{remark}\label{rem0}
By resorting to the state vector $x(t)$, we know that signed-average consensus (respectively, bipartite consensus) of (\ref{eq1}) refers to $\lim_{t\to\infty}x(t)=\left(1_{n}^{T}D_{n}x_{0}/n\right)D_{n}1_{n}$ (respectively, $\lim_{t\to\infty}x(t)=cD_{n}1_{n}$), where $D_{n}\in\mathcal{D}$ denotes some gauge transformation, and that stability of (\ref{eq1}) means $\lim_{t\to\infty}x(t)=0$. It has been revealed in the literature (see, e.g., \cite{Alta13}) that $D_{n}$ is generally related to the sign patterns of signed networks. In addition, the behaviors of signed networks depend heavily on their sign patterns. These will be also developed for distributed averaging of signed networks with general directed topologies.
\end{remark}

\section{Graph-Theoretic Analysis of Signed Networks}

We first introduce notions and properties related with signed digraphs, together with their induced graphs. Then we explore the Laplacian matrices of signed digraphs and their motivated Laplacian potential functions of the signed network (\ref{eq1}). In the following discussions, the time variable $t$ will be omitted for simplicity when no confusions may be caused.

\subsection{Notions and Properties of Signed Digraphs}

The interactions among agents can be modeled by a signed digraph $\mathcal{G}=(\mathcal{V},\mathcal{E},A)$ that includes a node set $\mathcal{V}=\{v_{1}$, $v_{2}$, $\cdots$, $v_{n}\}$, an edge set $\mathcal{E}\subseteq\{(v_i,v_j):v_{i},v_{j}\in\mathcal{V}\}$ and an adjacency weight matrix $A=[a_{ij}]\in\mathbb{R}^{n\times n}$ such that $a_{ij}\neq0\Leftrightarrow(v_j,v_i)\in\mathcal{E}$ and $a_{ij}=0$, otherwise. We suppose that the signed digraph $\mathcal{G}$ contains no self-loops, i.e., $a_{ii}=0$, $\forall i\in\mathcal{F}_n$, and is digon sign-symmetric, i.e., $a_{ij}a_{ji}\geq 0$, $\forall i,j\in\mathcal{F}_{n}$. For each node $v_i$, its in-degree and out-degree are defined as
\begin{equation*}
\mathrm{deg}_{\mathrm{in}}(v_{i})=\sum_{j=1}^n|a_{ij}|~\mbox{and}~\mathrm{deg}_{\mathrm{out}}(v_{i})=\sum_{j=1}^n|a_{ji}|.
\end{equation*}

\noindent We say that the signed digraph $\mathcal{G}$ is weight balanced when the condition $\mathrm{deg}_{\mathrm{out}}(v_{i})=\mathrm{deg}_{\mathrm{in}}(v_{i})$ holds for all $v_i\in \mathcal{V}$. Denote $\Delta=\mathrm{diag}\{\deg_{\mathrm{in}}(v_{1})$, $\deg_{\mathrm{in}}(v_{2})$, $\cdots$, $\deg_{\mathrm{in}}(v_{n})\}$ as the in-degree matrix of $\mathcal{G}$. The Laplacian matrix of $\mathcal{G}$, denoted by $L$, satisfies $L=\Delta-A$. In particular, when $A=A^{T}$ holds, $\mathcal{G}$ collapses into an undirected signed graph. An edge $e_{h}=(v_j,v_i)\in\mathcal{E}$ represents that $v_i$ can receive the information from $v_{j}$, in which $v_{j}$ is called a neighbor of $v_{i}$. All neighbors of of $v_i$ are collected in $N(i)=\{v_j:(v_{j},v_{i})\in \mathcal{E}\}$. A directed path $\mathcal{P}$ of length $k$ is formed by a finite sequence of edges satisfying $e_{i}=(v_{m_{i-1}},v_{m_{i}})$, $\forall i\in\mathcal{F}_{k}$, where $v_{m_{0}}$, $v_{m_{1}}$, $\cdots$, $v_{m_{k}}$ are different nodes. By contrast, an undirected path $\mathcal{P}_u$ of length $k$ allows $e_{i}=(v_{m_{i-1}},v_{m_{i}})$ or $e_{i}=(v_{m_{i}},v_{m_{i-1}})$, $\forall i\in \mathcal{F}_{k}$. When $\mathcal{P}$ is closed (i.e., $v_{m_{0}}=v_{m_{k}}$), $\mathcal{P}$ is also called a directed cycle $\mathcal{C}$. If $\mathcal{P}_u$ is closed (i.e., any of $v_{m_{0}}=v_{m_{k}}$, $v_{m_{1}}=v_{m_{k}}$, $v_{m_{0}}=v_{m_{k-1}}$ and $v_{m_{1}}=v_{m_{k-1}}$ holds), then $\mathcal{P}_u$ is also called a semi-cycle $\mathcal{C}_{u}$. It is said that a signed digraph $\mathcal{G}$ is strongly connected if there exists a directed path between every distinct pair of nodes. Besides, if the node set $\mathcal{V}$ can be separated into two disjoint subsets $\mathcal{V}_{1}$ and $\mathcal{V}_{2}$ such that $a_{ij}\geq0$, $\forall v_{i},v_{j}\in\mathcal{V}_{1}$, $\forall v_{i},v_{j}\in\mathcal{V}_{2}$ and $a_{ij}\leq0$, $\forall v_{i}\in\mathcal{V}_{1},v_{j}\in \mathcal{V}_{2}$, $\forall v_{j}\in \mathcal{V}_{1},v_{i}\in \mathcal{V}_{2}$, then $\mathcal{G}$ is structurally balanced; and otherwise, $\mathcal{G}$ is structurally unbalanced.

For any signed digraph, a unique unsigned digraph can be correspondingly induced, as shown in the following definition.

{\it Definition 1 (Induced Unsigned Digraph):} Given any signed digraph $\mathcal{G}=\left(\mathcal{V},\mathcal{E},A\right)$, a digraph $\overline{\mathcal{G}}=\left(\mathcal{V},\mathcal{E},\overline{A}\right)$ is called an induced unsigned digraph of $\mathcal{G}$ if its weight matrix $\overline{A}=\left[\overline{a}_{ij}\right]\in\mathbb{R}^{n\times n}$ is defined with entries satisfying $\overline{a}_{ij}=\left|a_{ij}\right|$, $\forall i,j\in\mathcal{F}_{n}$.

Let $\overline{L}$ represent the Lapalcian matrix of $\overline{\mathcal{G}}$. We can validate that $\overline{L}$ satisfies $\overline{L}=\Delta-\overline{A}$. For a particular case when $\mathcal{G}$ is structurally balanced, $\overline{L}=D_{n}LD_{n}$ holds for some gauge transformation $D_n\in \mathcal{D}$. Define $\overline{L}_{ii}\in\mathbb{R}^{(n-1)\times(n-1)}$, $\forall i\in\mathcal{F}_n$ as the matrix that is induced from $\overline{L}$ by removing its $i$th row and $i$th column, with which we correspondingly denote
\begin{equation}\label{equaa3}
W=\mathrm{diag}\left\{\det\left(\overline{L}_{11}\right),\det\left(\overline{L}_{22}\right),\cdots,\det\left(\overline{L}_{nn}\right)\right\}.
\end{equation}

For any signed digraph $\mathcal{G}$ together with its induced unsigned digraph $\overline{\mathcal{G}}$, we can further introduce a unique undirected signed graph $\hat{\mathcal{G}}$ in the following definition.

{\it Definition 2 (Mirror Signed Graph):} For any signed digraph $\mathcal{G}=\left(\mathcal{V},\mathcal{E},A\right)$, an undirected signed graph $\hat{\mathcal{G}}=\left(\mathcal{V},\hat{\mathcal{E}},\hat{A}\right)$ is called a mirror signed graph of $\mathcal{G}$ if its edge set $\hat{\mathcal{E}}$ is associated with a symmetric weight matrix $\hat{A}=\left[\hat{a}_{ij}\right]\in\mathbb{R}^{n\times n}$ given by %whose entries are defined by
\begin{equation}\label{eq:7}
\hat{a}_{ij}%=\hat{a}_{ji}
=\frac{\Ds\det\left(\overline{L}_{ii}\right)a_{ij}+\det\left(\overline{L}_{jj}\right)a_{ji}}{\Ds2},~~~\forall i,j\in\mathcal{F}_{n}.
\end{equation}

For Definition 2, we can validate from (\ref{eq:7}) that the adjacency weight matrix $\hat{A}$ of $\hat{\mathcal{G}}$ can be written as
\begin{equation*}
\hat{A}=\frac{\Ds WA+A^{T}W}{\Ds2}.
\end{equation*}

\noindent Moreover, if the Lapalcian matrix of $\hat{\mathcal{G}}$ is denoted by $\hat{L}$, then a candidate definition of $\hat{L}$ is proposed in the following lemma.

\begin{lemma}\label{lemma4}
For any signed digraph $\mathcal{G}$, the Lapalcian matrix $\hat{L}$ of its mirror signed graph $\hat{\mathcal{G}}$ can be given by
\[
\hat{L}=\frac{\Ds WL+L^{T}W}{\Ds2}.
\]
%
%\noindent where $L$ is the Laplacian matrix of $\mathcal{G}$.
\end{lemma}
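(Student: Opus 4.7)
The plan is to split $(WL+L^{T}W)/2$ into a "degree" part and an "adjacency" part using $L=\Delta-A$. Since $W$ and $\Delta$ are both diagonal they commute, so
\[
\frac{WL+L^{T}W}{2}
=\frac{W\Delta+\Delta W}{2}-\frac{WA+A^{T}W}{2}
=W\Delta-\hat{A},
\]
where the formula $\hat{A}=(WA+A^{T}W)/2$ is the one already observed to follow from (\ref{eq:7}). Since $\hat{L}=\hat{\Delta}-\hat{A}$ by definition of the Laplacian of $\hat{\mathcal{G}}$, the problem reduces to showing $W\Delta=\hat{\Delta}$, i.e., that the diagonal matrix $W\Delta$ is indeed the degree matrix of the mirror graph. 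The off-diagonal entries match automatically: for $i\neq j$, $((WL+L^{T}W)/2)_{ij}=-(W_{ii}a_{ij}+W_{jj}a_{ji})/2=-\hat{a}_{ij}$, which is exactly the off-diagonal part of $\hat{L}$.

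For the diagonal check, I would use the digon sign-symmetry $a_{ij}a_{ji}\geq 0$ together with the non-negativity of each $\det(\overline{L}_{ii})$ (positivity follows from strong connectivity of $\mathcal{G}$, equivalently of $\overline{\mathcal{G}}$) to strip the absolute values,
\[
|\hat{a}_{ij}|=\frac{\det(\overline{L}_{ii})|a_{ij}|+\det(\overline{L}_{jj})|a_{ji}|}{2}.
\]
Summing over $j\neq i$ gives
\[
\sum_{j\neq i}|\hat{a}_{ij}|
=\frac{\det(\overline{L}_{ii})\deg_{\mathrm{in}}(v_{i})+\sum_{j\neq i}\det(\overline{L}_{jj})|a_{ji}|}{2},
\]
so it suffices to establish the identity
\[
\sum_{j\neq i}\det(\overline{L}_{jj})|a_{ji}|=\det(\overline{L}_{ii})\deg_{\mathrm{in}}(v_{i}),\quad i\in\mathcal{F}_{n}.
\]

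The heart of the argument — and the one step I expect to require real work — is showing that the vector $\pi=(\det(\overline{L}_{11}),\ldots,\det(\overline{L}_{nn}))^{T}$ is a left null vector of $\overline{L}$, because the identity above is precisely the componentwise statement $\overline{\Delta}\pi=\overline{A}^{T}\pi$, i.e., $\overline{L}^{T}\pi=0$. This is essentially the directed Matrix-Tree theorem; I would derive it self-contained from: (i) $\overline{L}\mathbf{1}_{n}=0$, so $\det(\overline{L})=0$; (ii) the adjugate identity $\mathrm{adj}(\overline{L})\cdot\overline{L}=\det(\overline{L})I=0$; (iii) strong connectivity, which makes $\ker\overline{L}$ one-dimensional, forcing each column of $\mathrm{adj}(\overline{L})$ to be a scalar multiple of $\mathbf{1}_{n}$; hence $\mathrm{adj}(\overline{L})=\mathbf{1}_{n}\pi^{T}$ after reading off the diagonal entries $(\mathrm{adj}(\overline{L}))_{ii}=\det(\overline{L}_{ii})$. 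Then $\overline{L}\cdot\mathrm{adj}(\overline{L})=0$ together with $\mathrm{adj}(\overline{L})=\mathbf{1}_{n}\pi^{T}$ yields $(\overline{L}\mathbf{1}_{n})\pi^{T}=0$ trivially, while the dual relation $\mathrm{adj}(\overline{L})\overline{L}=0$ gives $\mathbf{1}_{n}(\pi^{T}\overline{L})=0$, whence $\pi^{T}\overline{L}=0$ as required.

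Combining Steps — the off-diagonal matching plus $W\Delta=\hat{\Delta}$ obtained from the null-vector identity — yields $\hat{L}=(WL+L^{T}W)/2$. The only delicate point is the Matrix-Tree step, which silently invokes strong connectivity of $\mathcal{G}$; I would state this hypothesis explicitly (it is the standing assumption of the paper, as is clear from the Introduction and Problem Statement) before invoking one-dimensionality of $\ker\overline{L}$.
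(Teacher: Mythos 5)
Your proof is correct and follows essentially the same route as the paper's: both split $(WL+L^{T}W)/2$ into a degree part and the adjacency part $\hat{A}=(WA+A^{T}W)/2$, and both reduce the diagonal match to the cofactor identity $\sum_{j}\det(\overline{L}_{jj})|a_{ji}|=\det(\overline{L}_{ii})\sum_{j}|a_{ij}|$, i.e.\ that $w=(\det(\overline{L}_{11}),\ldots,\det(\overline{L}_{nn}))^{T}$ is a left null vector of $\overline{L}$. The only differences are presentational: the paper simply cites \cite{Li09} for $w^{T}\overline{L}=0$ where you derive it self-contained from the adjugate identity, and you make explicit the use of digon sign-symmetry to write $|\hat{a}_{ij}|=(\det(\overline{L}_{ii})|a_{ij}|+\det(\overline{L}_{jj})|a_{ji}|)/2$, a step the paper leaves implicit.
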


\begin{proof}
See Appendix A.
\end{proof}

We proceed to present a useful lemma to reveal the relations between the connectivity and sign pattern properties of $\mathcal{G}$ and of $\hat{\mathcal{G}}$.

\begin{lemma}\label{lemma3}
For a strongly connected signed digraph $\mathcal{G}$ and its mirror signed graph $\hat{\mathcal{G}}$, the following results hold:
\begin{enumerate}
\item[1)]
$\hat{\mathcal{G}}$ is connected together with $\hat{\mathcal{E}}$ given by $\hat{\mathcal{E}}=\mathcal{E}\cup\tilde{\mathcal{E}}$, where $\tilde{\mathcal{E}}=\left\{(v_{i},v_{j}):(v_{j},v_{i})\in \mathcal{E}\right\}$;

\item[2)]
$\hat{\mathcal{G}}$ has the same sign pattern as $\mathcal{G}$ such that $\hat{\mathcal{G}}$ is structurally balanced (respectively, unbalanced) if and only if $\mathcal{G}$ is structurally balanced (respectively, unbalanced).
\end{enumerate}
\end{lemma}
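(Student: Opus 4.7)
The plan is to reduce both claims to two facts: first, that every diagonal entry $\det(\overline{L}_{ii})$ of $W$ in (\ref{equaa3}) is strictly positive, and second, that the digon sign-symmetry assumption $a_{ij}a_{ji}\geq 0$ prevents any cancellation in the definition of $\hat{a}_{ij}$ in (\ref{eq:7}). To establish the first, I would observe that because $\overline{a}_{ij}=|a_{ij}|$ shares the same zero pattern as $a_{ij}$, the induced unsigned digraph $\overline{\mathcal{G}}$ inherits strong connectivity from $\mathcal{G}$. The Matrix-Tree Theorem for weighted digraphs then expresses $\det(\overline{L}_{ii})$ as a positive weighted sum over spanning in-arborescences of $\overline{\mathcal{G}}$ rooted at $v_{i}$, and strong connectivity guarantees at least one such arborescence exists, so $W>0$.

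For part 1), since both coefficients $\det(\overline{L}_{ii})/2$ and $\det(\overline{L}_{jj})/2$ in (\ref{eq:7}) are strictly positive while $a_{ij}$ and $a_{ji}$ either share a common sign or both vanish by digon sign-symmetry, the two summands cannot cancel. Therefore $\hat{a}_{ij}=0$ if and only if $a_{ij}=a_{ji}=0$, which is equivalent to neither $(v_{j},v_{i})$ nor $(v_{i},v_{j})$ lying in $\mathcal{E}$. This yields $\hat{\mathcal{E}}=\mathcal{E}\cup\tilde{\mathcal{E}}$. Connectivity of $\hat{\mathcal{G}}$ then follows immediately because strong connectivity of $\mathcal{G}$ already forces the undirected skeleton $(\mathcal{V},\mathcal{E}\cup\tilde{\mathcal{E}})$, which is exactly the edge structure of $\hat{\mathcal{G}}$, to be connected.

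For part 2), the same non-cancellation reasoning shows that $\mathrm{sgn}(\hat{a}_{ij})$ agrees with $\mathrm{sgn}(a_{ij})$ whenever $a_{ij}\neq 0$ and with $\mathrm{sgn}(a_{ji})$ otherwise, so edge signs of $\hat{\mathcal{G}}$ match those of $\mathcal{G}$ one-for-one. Invoking the partition-based characterization of structural balance recalled in the preceding subsection, any bipartition $\mathcal{V}=\mathcal{V}_{1}\cup\mathcal{V}_{2}$ that witnesses structural balance of $\mathcal{G}$ transfers verbatim to $\hat{\mathcal{G}}$ and conversely, yielding the stated equivalence.

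I expect the main obstacle to be justifying $\det(\overline{L}_{ii})>0$ cleanly. If a Matrix-Tree-style result is available elsewhere in the paper, this becomes a one-line citation; otherwise I would invoke the standard fact that the principal submatrix obtained by deleting the $i$-th row and column of the Laplacian of a strongly connected weighted digraph is a nonsingular matrix with positive determinant, via either the Perron-Frobenius structure of $\overline{L}$ or a direct arborescence-counting cofactor argument. Once $W>0$ and digon sign-symmetry are in hand, the remaining steps are essentially bookkeeping.
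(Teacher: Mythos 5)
Your proof is correct, and its core is the same as the paper's: both arguments rest on the strict positivity of the cofactors $\det\left(\overline{L}_{ii}\right)$ for a strongly connected digraph together with the digon sign-symmetry condition $a_{ij}a_{ji}\geq 0$, which together rule out cancellation in (\ref{eq:7}) and force $\hat{a}_{ij}$ to vanish exactly when $a_{ij}=a_{ji}=0$ and otherwise to carry the common sign of $a_{ij}$ and $a_{ji}$. The two proofs part ways only at the final step of part 2): the paper passes through the Beineke--Harary characterization of structural balance via positivity of all semi-cycles, arguing that the sign-preserving correspondence between edges of $\mathcal{G}$ and $\hat{\mathcal{G}}$ preserves the sign of every semi-cycle; you instead work directly from the bipartition definition of structural balance given in Section III-A and show that a witnessing partition for either graph serves verbatim for the other. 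Your route is slightly more self-contained (no appeal to the external semi-cycle theorem) and the converse direction goes through cleanly because $\hat{a}_{ij}\geq 0$ forces $a_{ij}\geq 0$ under digon sign-symmetry. You also expand a step the paper disposes of by citation: the positivity of $\det\left(\overline{L}_{ii}\right)$, which the paper attributes to a reference and you justify via the Matrix-Tree/arborescence argument; either justification is acceptable.
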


\begin{proof}
See Appendix B.
\end{proof}

As a consequence of Lemma \ref{lemma3}, the following lemma shows the relationship between the Laplacian matrices of $\mathcal{G}$ and of $\hat{\mathcal{G}}$ from the viewpoint of their null spaces.

\begin{lemma}\label{lemma5}
For a strongly connected signed digraph $\mathcal{G}$ and its mirror signed graph $\hat{\mathcal{G}}$, $\mathcal{N}\left(\hat{L}\right)=\mathcal{N}\left(L\right)$ holds, and moreover,
\begin{enumerate}
\item[1)]
$\mathcal{N}\left(\hat{L}\right)=\mathrm{span}\left\{D_{n}1_{n}\right\}$ if and only if $\mathcal{G}$ is structurally balanced, where $D_{n}\in\mathcal{D}$ is such that $\overline{L}=D_{n}LD_{n}$;

\item[2)]
$\mathcal{N}\left(\hat{L}\right)=0_n$ if and only if $\mathcal{G}$ is structurally unbalanced.
\end{enumerate}
\end{lemma}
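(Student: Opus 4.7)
My plan is to combine Lemma~\ref{lemma4}, which writes $\hat{L}=(WL+L^{T}W)/2$, with Lemma~\ref{lemma3}, which says that $\hat{\mathcal{G}}$ is a connected undirected signed graph inheriting the structural balance status of $\mathcal{G}$. The signed Laplacian of an undirected signed graph is well known to be positive semidefinite with its null space dictated entirely by the balance partition; this will hand us $\mathcal{N}(\hat{L})$ explicitly in each of the two cases. What then remains is the identity $\mathcal{N}(\hat{L})=\mathcal{N}(L)$, which I would obtain from two one-line inclusions.

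First I would analyze $\mathcal{N}(\hat{L})$ directly. The standard undirected expansion gives
\[
x^{T}\hat{L}x=\tfrac{1}{2}\sum_{i,j=1}^{n}|\hat{a}_{ij}|\bigl(x_{i}-\mathrm{sgn}(\hat{a}_{ij})x_{j}\bigr)^{2},
\]
so $\hat{L}\geq 0$. Combining the vanishing condition for this quadratic form with the connectedness of $\hat{\mathcal{G}}$ (Lemma~\ref{lemma3}, 1)) and the shared sign pattern (Lemma~\ref{lemma3}, 2)): in the balanced case with gauge $D_{n}$, walking along edges forces $x_{i}=\sigma_{i}\sigma_{j}x_{j}$, so $x\in\mathrm{span}\{D_{n}1_{n}\}$; in the unbalanced case some semi-cycle carries an inconsistent sign product and only $x=0_{n}$ survives.

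For the identity $\mathcal{N}(L)=\mathcal{N}(\hat{L})$, the inclusion $\mathcal{N}(L)\subseteq\mathcal{N}(\hat{L})$ is immediate from Lemma~\ref{lemma4}: if $Lx=0_{n}$, then
\[
x^{T}\hat{L}x=\tfrac{1}{2}\bigl(x^{T}W(Lx)+(Lx)^{T}Wx\bigr)=0,
\]
and $\hat{L}\geq 0$ forces $\hat{L}x=0_{n}$. The reverse inclusion is vacuous in the unbalanced case; in the balanced case, using $\overline{L}=D_{n}LD_{n}$, $D_{n}^{2}=I$, and $\overline{L}1_{n}=0_{n}$,
\[
L(D_{n}1_{n})=D_{n}\overline{L}D_{n}(D_{n}1_{n})=D_{n}\overline{L}1_{n}=0_{n},
\]
so $\mathrm{span}\{D_{n}1_{n}\}=\mathcal{N}(\hat{L})\subseteq\mathcal{N}(L)$. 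These two inclusions close the loop and yield items 1) and 2).

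The main obstacle I anticipate is the first step: confirming that the vanishing of the sum of squares, read across the edges of $\hat{\mathcal{G}}$, picks out exactly $\mathrm{span}\{D_{n}1_{n}\}$ under balance, with the \emph{same} gauge $D_{n}$ as the one satisfying $\overline{L}=D_{n}LD_{n}$. This reduces to showing $\mathrm{sgn}(\hat{a}_{ij})=\mathrm{sgn}(a_{ij})=\sigma_{i}\sigma_{j}$ whenever $(v_{j},v_{i})\in\mathcal{E}\cup\tilde{\mathcal{E}}$, which follows from the formula (\ref{eq:7}), the digon sign-symmetry $a_{ij}a_{ji}\geq 0$, and positivity of the diagonal entries of $W$ (a consequence of strong connectedness via the Matrix-Tree theorem).
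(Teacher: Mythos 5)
Your proof is correct, but it is organized differently from the paper's. The paper first pins down $\mathcal{N}(L)$ by citing Altafini's results, establishes the single inclusion $\mathcal{N}(L)\subseteq\mathcal{N}(\hat{L})$ from $x^{T}\hat{L}x=0$ (the same positive-semidefiniteness step you use), and then closes the gap by a dimension count: Lemma~\ref{lemma3} transfers connectivity and balance to $\hat{\mathcal{G}}$, so $\mathrm{rank}(\hat{L})=n-1$ in the balanced case by \cite[Lemma 1]{Alta13} and $\hat{L}$ is nonsingular in the unbalanced case; the necessity directions are then obtained by separate appeals to \cite[Theorem 4.1]{Meng18} and \cite[Corollary 3]{Alta13}. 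You instead compute $\mathcal{N}(\hat{L})$ self-containedly from the sum-of-squares expansion of $x^{T}\hat{L}x$ over the edges of the connected graph $\hat{\mathcal{G}}$, and verify the reverse inclusion $\mathcal{N}(\hat{L})\subseteq\mathcal{N}(L)$ by the explicit computation $LD_{n}1_{n}=D_{n}\overline{L}1_{n}=0_{n}$. Your route buys two things: the ``only if'' halves come for free because structural balance and unbalance are exhaustive and the two computed null spaces differ, and you avoid the external rank and spectral citations; the cost is that you must check that $\mathrm{sgn}(\hat{a}_{ij})=\sigma_{i}\sigma_{j}$ for the \emph{same} gauge $D_{n}$ satisfying $\overline{L}=D_{n}LD_{n}$, which your last paragraph handles correctly via (\ref{eq:7}), digon sign-symmetry, and positivity of the $\det(\overline{L}_{ii})$. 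One caution: do not shortcut your sum-of-squares step by invoking Lemma~\ref{lem3}, whose proof in the paper itself cites Lemma~\ref{lemma5}; your independent derivation from the definition of $\hat{L}$ as a symmetric signed Laplacian is what keeps the argument non-circular.
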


\begin{proof}
See Appendix C.
\end{proof}

Motivated by Lemma \ref{lemma5}, we can further study the eigenvalue distribution for Laplacian matrices of mirror signed graphs.
\begin{corollary}
For a strongly connected signed digraph $\mathcal{G}$ and its mirror signed graph $\hat{\mathcal{G}}$, let $\lambda_1(\hat{L})$, $\lambda_2(\hat{L})$, $\cdots$, $\lambda_n(\hat{L})$ denote the eigenvalues of $\hat{L}$. The following results hold.
\begin{enumerate}
  \item $0=\lambda_1(\hat{L})<\lambda_2(\hat{L})\leq \cdots \leq \lambda_n(\hat{L})$ if and only if $\hat{\mathcal{G}}$ is structurally balanced.
  \item $0<\lambda_1(\hat{L})\leq \lambda_2(\hat{L}) \leq \cdots \leq \lambda_n(\hat{L})$ if and only if $\hat{\mathcal{G}}$ is structurally unbalanced.
\end{enumerate}
\end{corollary}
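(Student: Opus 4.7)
The plan is to reduce this corollary to Lemma \ref{lemma5} via a positive semi-definiteness argument. First, by Lemma \ref{lemma4} the matrix $\hat{L}=(WL+L^{T}W)/2$ is manifestly symmetric, so its eigenvalues are real and may be ordered as $\lambda_{1}(\hat{L})\le\cdots\le\lambda_{n}(\hat{L})$. Thus only two things remain to verify: (a) $\hat{L}\ge 0$, and (b) the multiplicity of the eigenvalue $0$ matches what Lemma \ref{lemma5} says about $\mathcal{N}(\hat{L})$.

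For step (a), I will exploit the fact that $\hat{L}$ is the Laplacian of the undirected signed graph $\hat{\mathcal{G}}$, which allows the standard representation $\hat{L}=\hat{\Delta}-\hat{A}$ with $\hat{\Delta}_{ii}=\sum_{j}|\hat{a}_{ij}|$. Using the digon sign-symmetry inherited by $\hat{A}$ from $\mathcal{G}$ via (\ref{eq:7}), I can rewrite the quadratic form as
\[
x^{T}\hat{L}x=\frac{1}{2}\sum_{i,j=1}^{n}|\hat{a}_{ij}|\bigl(x_{i}-\mathrm{sgn}(\hat{a}_{ij})x_{j}\bigr)^{2}\geq 0,
\]
where the convention $\mathrm{sgn}(0)=0$ absorbs the zero-weight terms. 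This gives $\lambda_{i}(\hat{L})\ge 0$ for every $i\in\mathcal{F}_{n}$.

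For step (b), Lemma \ref{lemma5} is directly applicable. When $\hat{\mathcal{G}}$ is structurally balanced (equivalently, $\mathcal{G}$ is, by Lemma \ref{lemma3}), $\mathcal{N}(\hat{L})=\mathrm{span}\{D_{n}1_{n}\}$ is one-dimensional, so exactly one eigenvalue vanishes and all others are strictly positive, yielding item 1). When $\hat{\mathcal{G}}$ is structurally unbalanced, $\mathcal{N}(\hat{L})=\{0_{n}\}$, so every eigenvalue is strictly positive, yielding item 2). Because structural balance and unbalance are mutually exclusive and exhaustive, and the two eigenvalue configurations are likewise mutually exclusive, each forward implication automatically carries its converse.

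The main obstacle I foresee is reconciling the two expressions for $\hat{L}$: the formula $\hat{L}=(WL+L^{T}W)/2$ from Lemma \ref{lemma4} and the standard signed-Laplacian form $\hat{\Delta}-\hat{A}$ needed for the quadratic-form computation. Matching their diagonals requires the identity $\det(\overline{L}_{ii})\sum_{j}|a_{ij}|=\sum_{j}\det(\overline{L}_{jj})|a_{ji}|$, which follows from the Matrix-Tree theorem applied to the strongly connected $\overline{\mathcal{G}}$ (the diagonal of $W$ is a left null vector of $\overline{L}$). Once this bookkeeping with absolute values and signs is handled, the rest of the proof is a direct invocation of Lemma \ref{lemma5}.
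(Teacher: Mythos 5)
Your proof is correct and rests on the same two pillars as the paper's: symmetry of $\hat{L}$ (real spectrum, and geometric multiplicity equals algebraic multiplicity, so Lemma \ref{lemma5}'s description of $\mathcal{N}(\hat{L})$ fixes the multiplicity of the zero eigenvalue), plus the mutual exclusivity of structural balance and unbalance to upgrade each implication to an equivalence. The only point where you genuinely diverge is how nonnegativity of the spectrum is obtained. The paper invokes the Gershgorin theorem: each disc of $\hat{L}$ is centered at $\hat{\Delta}_{ii}=\sum_{j}|\hat{a}_{ij}|$ with radius equal to that same quantity, so every (real) eigenvalue lies in $[0,2\max_{i}\hat{\Delta}_{ii}]$. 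You instead expand $x^{T}\hat{L}x$ as a nonnegative sum of squares. Both are sound; the Gershgorin route is cheaper because it uses only the diagonal dominance of a signed Laplacian, whereas your quadratic-form identity is essentially $2x^{T}\hat{L}x=\Phi_{\mathrm{e}}(x)$, i.e.\ a special case of what the paper proves later as Lemma \ref{lem3} (Appendix D). The ``bookkeeping'' you flag --- reconciling $(WL+L^{T}W)/2$ with $\hat{\Delta}-\hat{A}$ via the identity $\det(\overline{L}_{ii})\sum_{j}|a_{ij}|=\sum_{j}\det(\overline{L}_{jj})|a_{ji}|$ --- is exactly the content of the paper's proof of Lemma \ref{lemma4}, so you may simply cite that lemma rather than redo it. One small correction: the digon sign-symmetry you need is that of $A$ (so that $|\hat{a}_{ij}|=\bigl(\det(\overline{L}_{ii})|a_{ij}|+\det(\overline{L}_{jj})|a_{ji}|\bigr)/2$ holds without cancellation in (\ref{eq:7})), not of $\hat{A}$, which is symmetric by construction.
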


\begin{proof}
According to Definition 2, we can realize that $\hat{L}$ is a symmetric matrix and its all eigenvalues are real numbers. Since $\mathcal{G}$ is strongly connected, it follows from Lemma \ref{lemma3} that $\hat{\mathcal{G}}$ is strongly connected. With Lemma \ref{lemma5} and Gersgorin Theorem, we can complete this proof.
\end{proof}

\subsection{Improved Laplacian Potential Function and Its Properties}

The Laplacian potential function can be exploited to measure the total disagreements of all agents, which plays a central role in investigating the distributed control problems of signed networks. On the specific, the distributed control protocol can be induced from the gradient-based feedback of Laplacian potential functions (see e.g., \cite{Olfa04,Alta13}). Besides, the Laplacian potential function is used to develop the convergence analyses for dynamic behaviors of signed networks (see e.g., \cite{Olfa04,Alta13,Jia16,Ying16}), in which the weight balance assumption on signed digraphs is necessary. We can find that the requirement of weight balance may generate limitations on using Laplacian potential functions to work out the distributed control problems of signed networks. To avoid this drawback, we propose a new kind of Laplacian potential functions for signed networks that is named as improved Laplacian potential function and is given in the following definition.

{\it Definition 3 (Improved Laplacian Potential Function):} For any signed digraph $\mathcal{G}$, an improved Laplacian potential function $\Phi_\mathrm{e}(x)$ of the signed network (\ref{eq1}) is defined as
\begin{equation}\label{eq5}
\Phi_\mathrm{e}(x)=\sum_{i=1}^n\sum_{j=1}^{n}\det(\overline{L}_{ii})|a_{ij}|(x_i-\mathrm{sgn}(a_{ij})x_j)^2.
\end{equation}

We provide an expression of $\Phi_\mathrm{e}(x)$ with matrices $L$ and $W$, and give an useful property of $\Phi_\mathrm{e}(x)$ in the following lemma.

\begin{lemma}\label{lem3}
For the signed network (\ref{eq1}) under a signed digraph $\mathcal{G}$, its improved Laplacian potential function $\Phi_\mathrm{e}(x)$ satisfies
\begin{equation}\label{equ:5}
\begin{split}
\Phi_\mathrm{e}(x)=x^T(WL+L^TW)x.
\end{split}
\end{equation}

\noindent Furthermore, when the signed digraph $\mathcal{G}$ is strongly connected, the following results hold.
\begin{enumerate}
  \item $\Phi_\mathrm{e}(x)$ is semi-positive definite and $\Phi_\mathrm{e}(x)=0$ implies $x=cD_n1_n$ for some $c\in \mathbb{R}$ if and only if $\mathcal{G}$ is structurally balanced.
  \item $\Phi_\mathrm{e}(x)$ is positive definite and $\Phi_\mathrm{e}(x)=0$ implies $x=0_n$ if and only if $\mathcal{G}$ is structurally unbalanced.
\end{enumerate}
\end{lemma}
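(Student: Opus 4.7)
The plan is to first establish the identity (\ref{equ:5}) by direct expansion, and then use Lemma~\ref{lemma4}, Corollary~1, and Lemma~\ref{lemma5} to read off the semi/positive-definiteness of $\Phi_{\mathrm{e}}(x)$ together with the description of its zero set. For (\ref{equ:5}), I would let $w_{i}=\det(\overline{L}_{ii})$ so that $W=\mathrm{diag}\{w_{1},\dots,w_{n}\}$, and expand the squares in (\ref{eq5}) using $|a_{ij}|\,\mathrm{sgn}(a_{ij})=a_{ij}$ to obtain
\[
\Phi_{\mathrm{e}}(x)=\sum_{i,j}w_{i}|a_{ij}|x_{i}^{2}-2\sum_{i,j}w_{i}a_{ij}x_{i}x_{j}+\sum_{i,j}w_{i}|a_{ij}|x_{j}^{2}.
\]
The first sum equals $x^{T}W\Delta x$ and the middle sum equals $-2x^{T}WAx$. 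For the third sum, $\sum_{i}w_{i}|a_{ij}|$ is the $j$-th entry of $\overline{A}^{T}w$, and the needed identity $\overline{A}^{T}w=\Delta w$, i.e.\ $\overline{L}^{T}w=0$, is the classical Matrix-Tree-Theorem fact that the vector of principal cofactors of a digraph Laplacian lies in its left null space (this is the same property that must underlie Lemma~\ref{lemma4}). Granting it, the third sum also equals $x^{T}W\Delta x$, so the three terms combine into $\Phi_{\mathrm{e}}(x)=2x^{T}WLx$, which equals $x^{T}(WL+L^{T}W)x$ because the scalar $x^{T}WLx$ equals its transpose $x^{T}L^{T}Wx$.

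Once (\ref{equ:5}) is in hand, Lemma~\ref{lemma4} rewrites it as $\Phi_{\mathrm{e}}(x)=2x^{T}\hat{L}x$ with $\hat{L}$ symmetric. Corollary~1 then gives $\hat{L}\ge 0$ (respectively, $\hat{L}>0$) precisely when $\mathcal{G}$ is structurally balanced (respectively, unbalanced), which immediately delivers the semi-positive-definiteness (respectively, positive-definiteness) of $\Phi_{\mathrm{e}}$. For the zero set, symmetry and positive semi-definiteness of $\hat{L}$ give $\Phi_{\mathrm{e}}(x)=0\iff \hat{L}x=0\iff x\in\mathcal{N}(\hat{L})$, and Lemma~\ref{lemma5} completes the identification: $\mathcal{N}(\hat{L})=\mathrm{span}\{D_{n}1_{n}\}$ in the balanced case, so $\Phi_{\mathrm{e}}(x)=0$ forces $x=cD_{n}1_{n}$ for some $c\in\mathbb{R}$, while $\mathcal{N}(\hat{L})=\{0_{n}\}$ in the unbalanced case, so $\Phi_{\mathrm{e}}(x)=0$ forces $x=0_{n}$.

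The one non-trivial step is the cofactor identity $\overline{L}^{T}w=0$ used above; it is the left-null-vector property that is already implicit in the proof of Lemma~\ref{lemma4} and can therefore be imported rather than re-derived. Everything else is routine: a symmetric quadratic form, the semi-definiteness/definiteness furnished by Corollary~1, and the two kernel descriptions in Lemma~\ref{lemma5} assemble into the stated conclusion with no further calculation.
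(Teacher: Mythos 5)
Your proof is correct and follows essentially the same route as the paper's: both establish (\ref{equ:5}) via the cofactor left-null-vector identity $w^{T}\overline{L}=0_{n}^{T}$ from \cite{Li09} (the paper symmetrizes $x^{T}WLx$ directly, while you expand the squares in (\ref{eq5}) and collapse the third sum, which is only a cosmetic difference), and both then deduce the definiteness and kernel statements from Lemma~\ref{lemma4}, Corollary~1, and Lemma~\ref{lemma5}. No gaps.
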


\begin{proof}
See Appendix D.
\end{proof}

\begin{remark}
From \cite{Alta13}, the Laplacian potential function $\Phi(x)$ is given by
\begin{equation*}
\Phi(x)=\sum_{i=1}^n\sum_{j=1}^{n}|a_{ij}|(x_i-\mathrm{sgn}(a_{ij})x_j)^2.
\end{equation*}

\noindent When the signed digraph $\mathcal{G}$ is weight balanced, $\Phi(x)$ satisfies
\begin{equation}\label{equ3}
\Phi(x)=x^T(L+L^T)x.
\end{equation}

\noindent It is worth noticing that the equation (\ref{equ3}) fails once the weight balance condition is broken, which may lead to constrains on the application of $\Phi(x)$ when the associated signed digraph is weight unbalanced (see e.g., \cite{Alta13,Jia16,Ying16}). In contrast to $\Phi(x)$, the improved Laplacian potential function $\Phi_\mathrm{e}(x)$ has a series of coefficients $\det(\overline{L}_{ii})$, $\forall i\in \mathcal{F}_n$. From Lemma \ref{lem3}, we can develop that the equation (\ref{equ:5}) holds regardless of whether the associated signed digraph is weight balanced or not, which makes it possible to employ $\Phi_\mathrm{e}(x)$ to solve distributed control problems of signed networks under both weight balanced and unbalanced signed digraphs. This greatly extends the range of application for Laplacian potential functions in the studies of distributed control problems of signed networks.
\end{remark}

As a consequence of Lemma \ref{lem3}, we can induce the following corollary for improved Laplacian potential functions.

\begin{corollary}\label{cor1}
Consider a strongly connected, signed digraph $\mathcal{G}$. If $\mathcal{G}$ is weight balanced, then $\Phi_\mathrm{e}(x)$ satisfies
\begin{equation}\label{equ:8}
\begin{split}
\Phi_\mathrm{e}(x)&=\alpha\sum_{i=1}^n\sum_{j=1}^{n}|a_{ij}|\left(x_i-\mathrm{sgn}(a_{ij})x_j\right)^2\\
       &=\alpha x^T(L+L^T)x
\end{split}
\end{equation}

\noindent for some $\alpha>0$.
\end{corollary}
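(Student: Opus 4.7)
The plan is to reduce the corollary to showing that, under the weight balance hypothesis, the diagonal matrix $W$ defined in \eqref{equaa3} collapses to a positive scalar multiple of the identity, i.e.\ $W=\alpha I_{n}$ with $\alpha>0$. Once this is established, both claimed equalities fall out immediately: substituting $\det(\overline{L}_{ii})=\alpha$ into the definition \eqref{eq5} yields the first equality, and substituting $W=\alpha I_{n}$ into the matrix expression \eqref{equ:5} given by Lemma \ref{lem3} yields the second, so the two displayed identities are just \eqref{eq5} and \eqref{equ:5} rescaled.

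First I would observe that, since each $\overline{a}_{ij}=|a_{ij}|$, the signed digraph $\mathcal{G}$ being weight balanced is equivalent to its induced unsigned digraph $\overline{\mathcal{G}}$ being weight balanced, so $\mathbf{1}_{n}^{T}\overline{L}=0_{n}^{T}$ in addition to the universal identity $\overline{L}\mathbf{1}_{n}=0_{n}$. Next I would appeal to the Matrix Tree Theorem for digraphs: because $\mathcal{G}$ (hence $\overline{\mathcal{G}}$) is strongly connected, every cofactor $\det(\overline{L}_{ii})$ is strictly positive (it equals a weighted count of spanning in-arborescences rooted at $v_{i}$), and the vector
\[
w=\bigl[\det(\overline{L}_{11}),\det(\overline{L}_{22}),\cdots,\det(\overline{L}_{nn})\bigr]^{T}
\]
is a left null vector of $\overline{L}$, i.e.\ $w^{T}\overline{L}=0_{n}^{T}$. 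Strong connectivity of $\overline{\mathcal{G}}$ guarantees that this left null space is one dimensional, so combining $w^{T}\overline{L}=0_{n}^{T}$ with $\mathbf{1}_{n}^{T}\overline{L}=0_{n}^{T}$ forces $w=\alpha\mathbf{1}_{n}$ for some $\alpha>0$, hence $W=\alpha I_{n}$.

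The main technical obstacle is justifying that the cofactor vector $w$ is indeed a left null vector of $\overline{L}$ and that its one-dimensional null space is spanned by a single positive vector; this is exactly the content of the Matrix Tree Theorem for weighted digraphs together with the Perron--Frobenius type property for strongly connected digraphs, which I would quote rather than re-derive. With $W=\alpha I_{n}$ in hand, Lemma \ref{lem3} gives
\[
\Phi_{\mathrm{e}}(x)=x^{T}(WL+L^{T}W)x=\alpha\,x^{T}(L+L^{T})x,
\]
and pulling the common factor $\alpha=\det(\overline{L}_{ii})$ out of the double sum in \eqref{eq5} delivers
\[
\Phi_{\mathrm{e}}(x)=\alpha\sum_{i=1}^{n}\sum_{j=1}^{n}|a_{ij}|\bigl(x_{i}-\mathrm{sgn}(a_{ij})x_{j}\bigr)^{2},
\]
which completes the chain of equalities in \eqref{equ:8} with $\alpha>0$.
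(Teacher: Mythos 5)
Your proof is correct and takes essentially the same route as the paper: both arguments reduce the corollary to showing $W=\alpha I_n$ for some $\alpha>0$ and then substitute into \eqref{eq5} and \eqref{equ:5}. The only difference is that the paper obtains the equality $\det(\overline{L}_{11})=\cdots=\det(\overline{L}_{nn})$ by citing \cite{Li09} directly for strongly connected, weight-balanced digraphs, whereas you unpack that citation by combining the cofactor left-null-vector property $w^{T}\overline{L}=0_n^{T}$ (with $w>0$ via the Matrix Tree Theorem) with the observation that weight balance gives $1_n^{T}\overline{L}=0_n^{T}$ and strong connectivity makes the left null space one-dimensional.
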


\begin{proof}
Since the signed digraph $\mathcal{G}$ is strongly connected and weight balanced, we can directly derive from Definition 1 that $\overline{\mathcal{G}}$ is also strongly connected and weight balanced. It thus follows from \cite{Li09} that there exists some positive constant $\alpha$ such that

\begin{equation*}
\det(\overline{L}_{11})=\det(\overline{L}_{22})=\cdots=\det(\overline{L}_{nn})=\alpha.
\end{equation*}

\noindent Namely, $W=\alpha I_n$ holds due to (\ref{equaa3}). It is immediate to develop (\ref{equ:8}) from (\ref{equ:5}). This proof is complete.
\end{proof}

From (\ref{equ3}) and (\ref{equ:8}), it is obvious that $\Phi(x)$ is only a particular case of $\Phi_\mathrm{e}(x)$ when the signed digraph $\mathcal{G}$ is strongly connected and weight balanced.

\section{Distributed Averaging Protocols and Results}

In this section, we target at proposing two distributed control protocols such that signed networks achieve the signed-average consensus (respectively, stability) if and only if their associated signed digraphs are structurally balanced (respectively, unbalanced), regardless of whether the weight balance condition is satisfied or not. Benefitting from improved Laplacian potential functions, we can further develop the convergence analyses for our two proposed protocols from the perspective of Lyapunov stability theory.

For every $v_i$, we propose the first control protocol based on the nearest neighbor rule as
\begin{equation}\label{equ7}
u_i=-\sum_{v_j\in N(i)}|\hat{a}_{ij}|(x_i-\mathrm{sgn}(\hat{a}_{ij})x_j),~ \forall i\in \mathcal{F}_n.
\end{equation}

\noindent We should point out that the protocol (\ref{equ7}) can also be obtained by the gradient-based feedback of $\Phi_\mathrm{e}(x)$. By using $\hat{L}$, we can write (\ref{eq1}) and (\ref{equ7}) as a compact form of
\begin{equation}\label{equ8}
\dot{x}=-\hat{L}x.
\end{equation}

If the protocol (\ref{equ7}) is applied, then signed-average consensus results can be provided in the following theorem.

\begin{theorem}\label{thm1}
Consider the system (\ref{equ8}) under a signed digraph $\mathcal{G}$ that is strongly connected. Then, the system (\ref{equ8}) can achieve
\begin{enumerate}[1)]
  \item the signed-average consensus if and only if $\mathcal{G}$ is structurally balanced;
  \item the state stability if and only if $\mathcal{G}$ is structurally unbalanced.
\end{enumerate}
\end{theorem}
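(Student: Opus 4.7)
My plan is to exploit the symmetry of $\hat{L}$ in (\ref{equ8}) together with the spectral characterization provided by Lemma \ref{lemma5} and its subsequent corollary, and to use the improved Laplacian potential function $\Phi_\mathrm{e}(x) = 2x^{T}\hat{L}x$ (obtained by combining Lemmas \ref{lemma4} and \ref{lem3}) as a Lyapunov function for (\ref{equ8}).

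For the structurally balanced case, let $D_{n}\in\mathcal{D}$ be the gauge transformation provided by Lemma \ref{lemma5}, so that $\mathcal{N}(\hat{L}) = \mathrm{span}\{D_{n} 1_{n}\}$. I would first establish that the signed sum $1_{n}^{T} D_{n} x(t)$ is invariant along (\ref{equ8}): since $\hat{L}$ is symmetric and $\hat{L}D_{n} 1_{n} = 0_{n}$,
\begin{equation*}
\frac{d}{dt}\bigl(1_{n}^{T} D_{n} x\bigr) = -(D_{n} 1_{n})^{T} \hat{L} x = 0.
\end{equation*}
Setting $\bar{x}_{0} = (1_{n}^{T} D_{n} x_{0})/n$ and $e(t) = x(t) - \bar{x}_{0} D_{n} 1_{n}$, the error satisfies $\dot{e} = -\hat{L}e$, remains orthogonal to $D_{n} 1_{n}$, and therefore evolves in the orthogonal complement of $\mathcal{N}(\hat{L})$. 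Because $\hat{L}$ is symmetric positive semidefinite with null space $\mathrm{span}\{D_{n} 1_{n}\}$, its restriction to this invariant complement is positive definite, so $V(e) = \Phi_\mathrm{e}(e) = 2 e^{T} \hat{L} e$ is positive there for $e \neq 0_{n}$, and the direct computation $\dot{V} = -4 e^{T} \hat{L}^{2} e$ is strictly negative away from $e = 0_{n}$. Lyapunov's theorem then yields $e(t)\to 0_{n}$, i.e., the signed-average consensus (\ref{eq01}).

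For the structurally unbalanced case, $\hat{L}$ is symmetric positive definite by the corollary following Lemma \ref{lemma5}, so taking $V(x) = \Phi_\mathrm{e}(x) = 2x^{T}\hat{L}x$ as a global Lyapunov function on $\mathbb{R}^{n}$ gives $\dot{V} = -4 x^{T} \hat{L}^{2} x < 0$ for every $x \neq 0_{n}$, which delivers state stability directly.

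The ``only if'' directions I would handle by contrapositive using the null-space dichotomy in Lemma \ref{lemma5}. If $\mathcal{G}$ were structurally unbalanced, the previous paragraph forces $x(t)\to 0_{n}$ for every $x_{0}$, which contradicts signed-average consensus at any $x_{0}$ with $1_{n}^{T} D_{n} x_{0} \neq 0$; conversely, if $\mathcal{G}$ were structurally balanced, every nonzero $x_{0}\in\mathrm{span}\{D_{n} 1_{n}\}$ is an equilibrium of (\ref{equ8}), contradicting state stability. The one step I expect to require a bit of care is verifying strict decrease of $V$ on the invariant orthogonal complement of $\mathcal{N}(\hat{L})$ in the balanced case; once the null-space structure from Lemma \ref{lemma5} and the quadratic-form identity of Lemma \ref{lem3} are in hand, this reduces to a standard symmetric-matrix argument and no serious obstacle remains.
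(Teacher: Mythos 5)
Your proposal is correct, but it follows a genuinely different route from the paper's. The paper takes $V(x)=x^{T}x$ as the Lyapunov function, so that $\dot V=-x^{T}(WL+L^{T}W)x=-\Phi_{\mathrm{e}}(x)$; in the balanced case it invokes LaSalle's invariance principle to conclude convergence to $\mathrm{span}\{D_{n}1_{n}\}$ (bipartite consensus), and then identifies the consensus value separately by evaluating $\lim_{t\to\infty}e^{-\hat{L}t}=\nu_{r}\nu_{l}^{T}/(\nu_{l}^{T}\nu_{r})$ with $\nu_{l}=\nu_{r}=D_{n}1_{n}$. You instead take $V=\Phi_{\mathrm{e}}(x)=2x^{T}\hat{L}x$ itself as the Lyapunov function, extract the limit up front from the conservation law $\tfrac{d}{dt}(1_{n}^{T}D_{n}x)=0$, and prove convergence of the error $e=x-\bar{x}_{0}D_{n}1_{n}$ by strict decrease of $V$ on the invariant orthogonal complement of $\mathcal{N}(\hat{L})$ --- no LaSalle and no spectral-projection formula needed. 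Your version is more elementary and even yields an explicit exponential rate (since $e^{T}\hat{L}^{2}e\geq\lambda_{2}(\hat{L})\,e^{T}\hat{L}e$ on that subspace, $\dot V\leq-2\lambda_{2}(\hat{L})V$); the paper's choice $V=x^{T}x$ has the advantage of displaying $\Phi_{\mathrm{e}}$ as the dissipation rate --- which is the thematic point of the section --- and of carrying over unchanged to the nonsymmetric system $\dot{x}=-WLx$ in Theorem 2, where your symmetry-based error decomposition would need modification. Your handling of the ``only if'' directions (contrapositive via the structural balance/unbalance dichotomy) matches the paper's contradiction argument in substance.
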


\begin{proof}
We select a Lyapunov function candidate for the system (\ref{equ8}) as
\begin{equation*}
V(x)=x^Tx.
\end{equation*}

\noindent Taking the derivation of $V$ along (\ref{equ8}) leads to
\begin{equation*}
\begin{split}
\dot{V}(x)&=\dot{x}^Tx+x^T\dot{x}=-x^T(\hat{L}^T+\hat{L})x\\
&=-x^T(WL+L^TW)x=-\Phi_\mathrm{e}(x).
\end{split}
\end{equation*}

``$\Leftarrow$'': 1) When the signed digraph $\mathcal{G}$ is structurally balanced, we can develop $\dot{V}\leq 0$ from Lemma \ref{lem3}. It is immediate to obtain that the trajectories converge to the largest invariant set $S=\{x\in \mathbb{R}^{n}|\dot{V}(x)=0\}$ as $t\rightarrow \infty$ from LaSalle's Theorem \cite[Theorem 4.4]{Khal}. This, together with Lemma \ref{lem3}, guarantees $S=\{x\in \mathbb{R}^{n}|x=c D_n1_n$, $c\in \mathbb{R}\}$, which indicates that the system (\ref{equ8}) can achieve the bipartite consensus.

In the following, we investigate the convergency value of the system (\ref{equ8}). Since $\mathcal{G}$ is strongly connected and structurally balanced, its mirror signed graph $\mathcal{\hat{G}}$ is connected and structurally balanced from Lemma \ref{lemma3}. We can deduce that $\nu_l=D_n1_n$ and $\nu_r=D_n1_n$ are the left and right eigenvectors of $\hat{L}$ associated with zero eigenvalue, respectively. The terminal state of the system (\ref{equ8}) is given by
\begin{equation}\label{equ12}
\begin{split}
x(\infty)&=\lim_{t\rightarrow \infty}e^{-\hat{L}t}x(0)\\
&=\frac{\nu_r\nu_l^T}{\nu_l^T\nu_r}x(0)=\frac{1_n^TD_nx(0)}{n}D_n1_n.
\end{split}
\end{equation}

\noindent From (\ref{equ12}), we realize that the system (\ref{equ8}) can accomplish the signed-average consensus objective when the signed digraph $\mathcal{G}$ is structurally balanced.

2) When the signed digraph $\mathcal{G}$ is structurally unbalanced, it follows from Lemma \ref{lemma3} that $\mathcal{\hat{G}}$ is structurally unbalanced and $\hat{L}$ is a positive definite matrix, which leads to $\dot{V}(x)<0$. This, together with \cite[Theorem 4.1]{Khal}, can guarantee that the system (\ref{equ8}) is asymptotically stable.

``$\Rightarrow$'': 1) We employ the proof by contradiction and assume that $\mathcal{G}$ is structurally unbalanced. It follows from Lemma \ref{lemma3} that $\mathcal{\hat{G}}$ is also structurally unbalanced and $-\hat{L}$ is Hurwitz stable. Hence, the system (\ref{equ8}) is asymptotically stable, which causes a contradiction on the system (\ref{equ8}) achieving the signed-average consensus. On the contrary, $\mathcal{G}$ is structurally balanced.

2) Suppose that $\mathcal{G}$ is structurally balanced. The system (\ref{equ8}) can reach the bipartite consensus, which causes a contradiction on the state stability of the system (\ref{equ8}). Thus, $\mathcal{G}$ is structurally unbalanced. We complete this proof.
\end{proof}

\begin{remark}
According to theorem \ref{thm1}, we know that the signed-average consensus problems of signed networks under strongly connected signed digraphs can be equivalently converted into the bipartite consensus problems of signed networks under the associated mirror signed graphs, which gives a new perspective to explore distributed averaging problems of signed networks.
\end{remark}

Different from the protocol (\ref{equ7}), we give the second protocol based on the nearest neighbor rule as
\begin{equation}\label{eq10}
u_i=-\sum_{v_j\in N(i)}\det(\overline{L}_{ii})|a_{ij}|(x_i-\mathrm{sgn}(a_{ij})x_j),~i\in \mathcal{F}_n.
\end{equation}

\begin{remark}
In comparison with the classical distributed protocol first proposed in \cite{Alta13}
\begin{equation}\label{equ11}
u_i=-\sum_{v_j\in N(i)}|a_{ij}|(x_i-\mathrm{sgn}(a_{ij})x_j),~i\in \mathcal{F}_n,
\end{equation}

\noindent the protocol (\ref{eq10}) consists of a control gain $\det(\overline{L}_{ii})$, $i\in \mathcal{F}_n$. When $\mathcal{G}$ is strongly connected and weight balanced, with the proof of Corollary \ref{cor1}, the protocol (\ref{eq10}) becomes
\begin{equation}\label{eq11}
u_i=-\alpha\sum_{v_j\in N(i)}|a_{ij}|[x_i-\mathrm{sgn}(a_{ij})x_j],~i\in \mathcal{F}_n.
\end{equation}

\noindent Compared with the protocols (\ref{equ11}) and (\ref{eq11}), we can easily find that the protocol (\ref{equ11}) is a particular case of the protocol (\ref{eq11}), in which the gain $\alpha$ just has an effect on the convergence rate.
\end{remark}

Applying the protocol (\ref{eq10}) to the signed network (\ref{eq1}) yields
\begin{equation}\label{eq9}
\dot{x}=-WLx.
\end{equation}

The following lemma discloses the distribution of eigenvalues for the matrix $WL$.

\begin{lemma}\label{lem5}
Consider a strongly connected signed digraph $\mathcal{G}$. The following results for $WL$ hold.
\begin{enumerate}
  \item $WL$ has a zero eigenvalue and all other eigenvalues with positive real parts if and only if $\mathcal{G}$ is structurally balanced.
  \item All eigenvalues of $WL$ have positive real parts if and only if $\mathcal{G}$ is structurally unbalanced.
\end{enumerate}
\end{lemma}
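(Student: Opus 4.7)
The proof exploits the symmetric-part identity $\hat{L}=(WL+L^{T}W)/2$ established in Lemma \ref{lemma4}, together with the positive (semi-)definiteness of $\hat L$ that is implicit in Lemma \ref{lem3} through $\Phi_{\mathrm{e}}(x)=2x^{T}\hat{L}x$. The plan is to read off the real parts of the eigenvalues of the non-symmetric matrix $WL$ from those of its symmetric part $\hat{L}$ via a Rayleigh-quotient-type identity, and then account for the zero eigenvalue separately using $\mathcal{N}(WL)=\mathcal{N}(L)$.

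First I would record that $W$ is a positive diagonal matrix: strong connectivity of $\mathcal{G}$ implies the same for $\overline{\mathcal{G}}$ by Definition 1, whence the matrix tree theorem yields $\det(\overline{L}_{ii})>0$ for every $i\in\mathcal{F}_{n}$. Then, for any eigenvalue $\lambda\in\mathbb{C}$ of $WL$ with unit eigenvector $v\in\mathbb{C}^{n}$, pre-multiplying $WLv=\lambda v$ by $v^{*}$ and adding its conjugate transpose yields $v^{*}(WL+L^{T}W)v=2\,\mathrm{Re}(\lambda)$, so that $\mathrm{Re}(\lambda)=v^{*}\hat{L}v$ by Lemma \ref{lemma4}.

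For the forward direction of statement 1 (balanced), Lemma \ref{lem3} gives $\hat{L}\geq 0$ with $v^{*}\hat{L}v=0$ forcing $v\in\mathcal{N}(\hat{L})$, while Lemma \ref{lemma5} gives $\mathcal{N}(\hat{L})=\mathcal{N}(L)=\mathrm{span}\{D_{n}1_{n}\}$. Hence $\mathrm{Re}(\lambda)\geq 0$ always, and $\mathrm{Re}(\lambda)=0$ forces $Lv=0$, so $WLv=0$ and $\lambda=0$. Existence of $0$ as an eigenvalue of $WL$ is immediate from $WL(D_{n}1_{n})=0$. For statement 2 (unbalanced), Lemma \ref{lem3} gives $\hat{L}>0$, so $\mathrm{Re}(\lambda)=v^{*}\hat{L}v>0$ for every eigenvalue. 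Both converses then follow by contradiction, since a strongly connected signed digraph is either structurally balanced or unbalanced and the two spectral pictures are mutually exclusive.

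The main subtle point is the phrase ``all \emph{other} eigenvalues'' in statement 1, which requires the zero eigenvalue of $WL$ to be algebraically simple rather than only geometrically simple. Geometric simplicity is immediate since $W$ is invertible, so $\mathcal{N}(WL)=\mathcal{N}(L)$ is one-dimensional. For algebraic simplicity, I would exploit the balanced-case factorization $L=D_{n}\overline{L}D_{n}$ to write $WL=D_{n}(W\overline{L})D_{n}$, exhibiting a similarity between $WL$ and $W\overline{L}$. The matrix $W\overline{L}$ is itself the Laplacian of a strongly connected weighted (unsigned) digraph obtained from $\overline{\mathcal{G}}$ by scaling row $i$ by the positive factor $\det(\overline{L}_{ii})$; the zero eigenvalue of such a Laplacian is well known to be algebraically simple, and this property transfers through the similarity to $WL$.
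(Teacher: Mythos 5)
Your proof is correct, but it takes a genuinely different route from the paper. The paper's entire proof of this lemma is two sentences: it observes that $W$ is a positive diagonal matrix and then cites Theorems 4.1 and 4.2 of \cite{Meng18} (the $M$-matrix analysis of directed signed Laplacians), so the spectral dichotomy for $WL$ is imported wholesale from outside. You instead build a self-contained argument out of the paper's own machinery: the identity $\mathrm{Re}(\lambda)=v^{*}\hat{L}v$ obtained by symmetrizing $v^{*}WLv=\lambda$, combined with the (semi-)definiteness of $\hat{L}$ from Lemma \ref{lem3} and the null-space identification $\mathcal{N}(\hat{L})=\mathcal{N}(L)$ from Lemma \ref{lemma5}. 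This field-of-values argument is sound (for a real symmetric $\hat L\succeq 0$ one indeed has $v^{*}\hat Lv\geq 0$ for complex $v$, with equality forcing $\hat Lv=0$), and you are right to flag that it controls only the location of eigenvalues, not the Jordan structure at $0$; your separate treatment of algebraic simplicity via the similarity $WL=D_{n}(W\overline{L})D_{n}$ and the standard simple-zero-eigenvalue property of the Laplacian $W\overline{L}$ of a strongly connected unsigned digraph closes that gap correctly (note that $W\overline{L}1_{n}=0$ because the row scaling preserves the zero-row-sum structure). What each approach buys: the paper's citation is shorter and leans on a general $M$-matrix theory that also covers cases not reducible to symmetric parts; your argument is longer but makes the role of the mirror graph $\hat{\mathcal{G}}$ and the improved potential $\Phi_{\mathrm{e}}$ transparent, keeps the paper self-contained, and in fact exhibits exactly the mechanism that the Lyapunov proofs of Theorems \ref{thm1} and \ref{thm2} later exploit. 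The only external fact you still invoke is the algebraic simplicity of the zero eigenvalue for strongly connected unsigned digraph Laplacians, which is standard (e.g., \cite{Ren05}) and could also be justified by the matrix-tree theorem.
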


\begin{proof}
Because the matrix $W$ is a diagonal matrix and all diagonal elements are positive real numbers. We immediately obtain these results from \cite[Theorems 4.1 and 4.2]{Meng18}.
\end{proof}

With Lemma \ref{lem5}, we are in position to present signed-average consensus results of signed networks in the following theorem.

\begin{theorem}\label{thm2}
Consider the system (\ref{eq9}) under a signed digraph $\mathcal{G}$ that is strongly connected. Then, the system (\ref{eq9}) can reach
\begin{enumerate}[1)]
  \item the signed-average consensus if and only if $\mathcal{G}$ is structurally balanced;
  \item the state stability if and only if $\mathcal{G}$ is structurally unbalanced.
\end{enumerate}
\end{theorem}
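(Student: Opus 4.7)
The plan is to follow the Lyapunov--LaSalle template of Theorem \ref{thm1}, but adapted to the asymmetric flow $\dot{x}=-WLx$. I would take $V(x)=x^Tx$ as the Lyapunov candidate. Differentiating along (\ref{eq9}),
\begin{equation*}
\dot{V}(x) = -2x^T W L x = -x^T(WL+L^T W)x = -\Phi_\mathrm{e}(x),
\end{equation*}
so by Lemma \ref{lem3}, $\dot{V}(x)\le 0$ in both structural cases. In the structurally unbalanced case, $\Phi_\mathrm{e}$ is positive definite, so $V$ is a strict Lyapunov function and $x(t)\to 0_n$; alternatively, Lemma \ref{lem5} says $-WL$ is Hurwitz and the conclusion is immediate. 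In the structurally balanced case, LaSalle's invariance principle drives trajectories into the largest invariant subset of $\{x:\Phi_\mathrm{e}(x)=0\}$, which by Lemma \ref{lem3} is contained in $\{cD_n 1_n : c\in\mathbb{R}\}$. Since $\overline{L}\,1_n=0_n$ and $\overline{L}=D_n L D_n$ give $LD_n 1_n=0_n$, every point of this subspace is an equilibrium, so trajectories converge to some $x(\infty)=c^\star D_n 1_n$.

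The key remaining step, which I expect to be the main obstacle, is pinning down $c^\star$ so that it matches the signed-average $\frac{1}{n}1_n^T D_n x_0$ required by (\ref{eq01}). For this I would identify a conserved linear functional by locating a left null vector of $WL$. The crucial ingredient, used implicitly in the proof of Corollary \ref{cor1} via \cite{Li09}, is that for any strongly connected induced unsigned digraph $\overline{\mathcal{G}}$ the cofactor vector with entries $\det(\overline{L}_{ii})$ spans the (positive) left null space of $\overline{L}$, i.e., $1_n^T W\overline{L}=0_n^T$. Using $\overline{L}=D_n L D_n$ and the commutativity of the diagonal matrices $W$ and $D_n$ then yields $(D_n 1_n)^T WL=0_n^T$, so $D_n 1_n$ is simultaneously a right and left zero eigenvector of $WL$. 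Hence $(D_n 1_n)^T x(t)\equiv (D_n 1_n)^T x_0=1_n^T D_n x_0$, and combining this with $x(\infty)=c^\star D_n 1_n$ gives $c^\star=\frac{1}{n}1_n^T D_n x_0$, which is exactly the value in (\ref{eq01}).

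The two necessity directions would then follow by contradiction, as in Theorem \ref{thm1}. If $\mathcal{G}$ were structurally unbalanced, the argument above would force $x(t)\to 0_n$ for every initial state, contradicting signed-average consensus whenever $1_n^T D_n x_0\ne 0$; and if $\mathcal{G}$ were structurally balanced, the sufficiency part would produce a nonzero bipartite limit for generic $x_0$, contradicting state stability.
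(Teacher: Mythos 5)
Your proposal is correct and follows essentially the same route as the paper: the Lyapunov function $V(x)=x^Tx$ with $\dot V=-\Phi_\mathrm{e}(x)$, LaSalle in the balanced case, Hurwitz stability of $-WL$ in the unbalanced case, and the identification of $D_n1_n$ as a simultaneous left and right null vector of $WL$ via the cofactor identity $1_n^TW\overline{L}=0_n^T$ from \cite{Li09} together with $\overline{L}=D_nLD_n$ and the commutativity of the diagonal matrices $W$ and $D_n$. The only cosmetic difference is that you extract the limit $c^\star=\frac{1}{n}1_n^TD_nx_0$ from the conserved functional $(D_n1_n)^Tx(t)$, whereas the paper evaluates $\lim_{t\to\infty}e^{-WLt}x_0$ through the spectral projection $\nu_r\nu_l^T/(\nu_l^T\nu_r)$ --- these are the same fact.
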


\begin{proof}
We construct a Lyapunov function candidate
\begin{equation*}
V(x)=x^Tx.
\end{equation*}

\noindent Taking the derivation of $V$ along (\ref{eq9}) yields
\begin{equation*}
\begin{split}
\dot{V}(x)=\dot{x}^Tx+x^T\dot{x}=-x^T(L^TW+WL)x=-\Phi_\mathrm{e}(x).
\end{split}
\end{equation*}

1) When the signed digraph $\mathcal{G}$ is structurally balanced, there exists a matrix $D_n\in \mathcal{D}$ such that $\overline{L}=D_nLD_n$ holds. From \cite{Li09}, we can realize $[\det(\overline{L}_{11})$, $\det(\overline{L}_{22})$, $\cdots$, $\det(\overline{L}_{nn})]\overline{L}$ $=$ $0_n^T$ that indicates $\det(\overline{L}_{ii})\sum_{j=1}^n|a_{ij}|=\sum_{j=1}^n\det(\overline{L}_{jj})|a_{ji}|$, $\forall i\in \mathcal{F}_n$. It directly produces $1_n^TW\overline{L}=0_n^T$. By $\overline{L}=D_nLD_n$, we have $1_n^TW\overline{L}=1_n^TWD_nLD_n=1_n^TD_nWLD_n=0_n^T$. Thus, $\nu_l=D_n1_n$ and $\nu_r=D_n1_n$ are the left and right eigenvectors of $WL$ corresponding to zero eigenvalue, respectively. The rest of proof is similar to the proof of Theorem \ref{thm1}.

2) When $\mathcal{G}$ is structurally unbalanced, the proof is same as the proof of Theorem \ref{thm1} and we omit it for simplicity.
\end{proof}

\begin{remark}
From Theorems \ref{thm1} and \ref{thm2}, we can solve the signed-average consensus issues for any directed signed networks, in which two distributed control protocols are proposed to ensure the signed-average consensus no matter whether the associated signed digraphs are weight balanced or not. It greatly extends the existing bipartite consensus results of signed networks (see e.g., \cite{Alta13}). Because unsigned networks are a particular case of signed networks, our proposed protocols (\ref{equ7}) and (\ref{eq10}) can also solve average consensus issues of unsigned networks although their associated unsigned digraphs are weight unbalanced.
\end{remark}

\begin{remark}
From \cite[Remark 2]{Alta13}, it follows that the Laplacian potential function $\Phi(x)$ can be exploited to derive the convergence analysis of the control protocol (\ref{equ11}) when the associated signed digraph is strongly connected and weight balanced. It is worth noticing that $\Phi(x)$ does not work if the weight balance condition is broken. According to the proofs of Theorems \ref{thm1} and \ref{thm2}, we find that this drawback can be removed by employing the improved Laplacian potential function $\Phi_\mathrm{e}(x)$ that gives an alternative approach to constructing Lyapunov functions for convergence analysis of signed networks regardless of whether the weight balance condition is satisfied or not.
\end{remark}

\section{An Application of Improved Laplacian Potential Functions}

The fixed-time consensus problems have been explored for signed networks subject to undirected topologies in \cite{Jia16}, from which nonlinear distributed protocols are proposed to ensure all agents achieving the bipartite consensus (or state stability) within a fixed time and the corresponding convergence analyses can be derived by taking advantage of Laplacian potential functions. However, the existing results in \cite{Jia16} are ineffective for signed networks under directed topologies. In this section, benefitting from improved Laplacian potential functions, we concentrate on studying how to extend the existing fixed-time consensus results to directed signed networks.

For any initial states $x_{i0}$, $\forall i\in \mathcal{F}_n$, we say that the signed network (\ref{eq1}) achieves
\begin{enumerate}
\item fixed-time bipartite consensus if there exist some scalars $\sigma_{i}\in\{\pm1\}$ and $c\neq0$ such that
\begin{equation}\label{equa15}
\left\{
\begin{aligned}
&\lim_{t\rightarrow T}x_i(t)=\sigma_ic\\
&x_i(t)=\sigma_ic,~\forall t\geq T
\end{aligned},\quad i\in \mathcal{F}_n%~\mbox{and}~c\in \mathbb{R}
\right.
\end{equation}
\item fixed-time state stability if
\begin{equation}\label{equa16}
\left\{
\begin{aligned}
&\lim_{t\rightarrow T}x_i(t)=0\\
&x_i(t)=0,~\forall t\geq T
\end{aligned},\quad i\in \mathcal{F}_n
\right.
\end{equation}
\end{enumerate}

\noindent where $T\in [0,\infty)$ is the setting time that is independent of the initial states $x_{i0}$, $\forall i\in \mathcal{F}_n$. In particular, when $c$ satisfies $c=\frac{\Ds 1}{\Ds n}\sum_{j=1}^{n}\sigma_{j}x_{j0}$, we say that the signed network (\ref{eq1}) achieves fixed-time signed-average consensus.

In order to achieve the fixed-time consensus objective (\ref{equa15}) and (\ref{equa16}), we propose a distributed control protocol as follows
\begin{equation}\label{equaa16}
\begin{split}
u_i&=k_1\left(-\sum_{v_j\in N(i)}|\hat{a}_{ij}|(x_i-\mathrm{sgn}(\hat{a}_{ij})x_j)\right)^{\frac{m}{r}}\\
&~~~+k_2\left(-\sum_{v_j\in N(i)}|\hat{a}_{ij}|(x_i-\mathrm{sgn}(\hat{a}_{ij})x_j)\right)^{\frac{p}{q}},\quad i\in \mathcal{F}_n
\end{split}
\end{equation}

\noindent where $k_1>0$ and $k_2>0$ are control gains, and $m$, $r$, $p$, and $q$ are positive odd integers satisfying $m>r$ and $q>p$.

With the protocol (\ref{equaa16}) being employed, we can develop the fixed-time consensus results in the following theorem.

\begin{theorem}\label{thm3}
For the signed network (\ref{eq1}) whose communication topology is described by a strongly connected signed digraph $\mathcal{G}$, let the protocol (\ref{equaa16}) be applied. If $\mathcal{G}$ is structurally balanced, then the signed network (\ref{eq1}) can achieve the fixed-time bipartite consensus and the setting time $T$ satisfies
\begin{equation}\label{equ:26}
T\leq \frac{1}{\lambda_2(\hat{L})}\left(\frac{n^{\frac{m-r}{2r}}}{k_1}\frac{r}{m-r}+\frac{1}{k_2}\frac{q}{q-p}\right).
\end{equation}

If $\mathcal{G}$ is structurally unbalanced, then the signed network (\ref{eq1}) can accomplish the fixed-time state stability objective and the setting time $T$ satisfies
\begin{equation}\label{equ:27}
T\leq \frac{1}{\lambda_1(\hat{L})}\left(\frac{n^{\frac{m-r}{2r}}}{k_1}\frac{r}{m-r}+\frac{1}{k_2}\frac{q}{q-p}\right).
\end{equation}
\end{theorem}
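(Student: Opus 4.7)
The plan is to run a Polyakov-type fixed-time Lyapunov argument built on the improved Laplacian potential, with the decay rate supplied by the spectral gap of $\hat L$ from Corollary~1 and the kernel structure of $\hat L$ from Lemma~\ref{lemma5}. In exact analogy with the compact form $\dot x = -\hat L x$ derived for the linear protocol (\ref{equ7}) in (\ref{equ8}), and using that $m,r,p,q$ are positive odd integers (so that with the convention $\mathrm{sig}(\xi)^{\alpha}:=|\xi|^{\alpha}\mathrm{sgn}(\xi)$ one has $(-\xi)^{m/r}=-\mathrm{sig}(\xi)^{m/r}$ and similarly for $p/q$), applying (\ref{equaa16}) to (\ref{eq1}) yields componentwise the closed-loop system
\[
\dot x \;=\; -k_1\,\mathrm{sig}(\hat L x)^{m/r}\;-\;k_2\,\mathrm{sig}(\hat L x)^{p/q}.
\]
I then take $V(x)=\tfrac{1}{2}x^T\hat L x=\tfrac{1}{4}\Phi_{\mathrm e}(x)$ as Lyapunov candidate, which by Lemmas~\ref{lem3} and~\ref{lemma5} is nonnegative with $V=0$ iff $x\in\mathcal N(\hat L)$, equal to $\mathrm{span}\{D_n 1_n\}$ in the structurally balanced case and to $\{0\}$ in the structurally unbalanced case.

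Differentiating $V$ along trajectories, using symmetry of $\hat L$ together with the elementary identity $\xi\cdot\mathrm{sig}(\xi)^{\alpha}=|\xi|^{1+\alpha}$, and setting $y:=\hat L x$, I obtain
\[
\dot V \;=\; x^T\hat L\,\dot x \;=\; -k_1\sum_{i=1}^n |y_i|^{(m+r)/r}\;-\;k_2\sum_{i=1}^n |y_i|^{(p+q)/q}.
\]
Because $(m+r)/r>2$ and $(p+q)/q<2$, the power-mean inequalities give $\sum_i|y_i|^{(m+r)/r}\ge n^{1-(m+r)/(2r)}\|y\|^{(m+r)/r}$ and $\sum_i|y_i|^{(p+q)/q}\ge\|y\|^{(p+q)/q}$, while the Rayleigh-type bound $\|y\|^{2}=x^{T}\hat L^{2}x\ge \lambda^{\star}\,x^{T}\hat L x=2\lambda^{\star}V$, with $\lambda^{\star}=\lambda_{2}(\hat L)$ in the balanced case and $\lambda^{\star}=\lambda_{1}(\hat L)$ in the unbalanced case (both strictly positive by Corollary~1), produces an inequality of the form $\dot V\le -\alpha V^{(m+r)/(2r)}-\beta V^{(p+q)/(2q)}$ with explicit $\alpha,\beta>0$ depending on $k_1,k_2,n,\lambda^{\star}$. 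Since $(m+r)/(2r)>1>(p+q)/(2q)>0$, a Polyakov--Zuo fixed-time stability lemma yields $V(x(t))\equiv 0$ for $t\ge T$ with $T\le 1/[\alpha(m-r)/(2r)]+1/[\beta(q-p)/(2q)]$, and regrouping the constants reduces this estimate to (\ref{equ:26}) in the balanced case and to (\ref{equ:27}) in the unbalanced case.

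Finally, $V\equiv 0$ together with Lemma~\ref{lemma5} forces $x(t)\in\mathcal N(\hat L)$, delivering either $x(t)=c\,D_n 1_n$ (bipartite consensus, with $c$ read off from $x(T)$) or $x(t)=0$ (state stability); the ``stay-at'' clauses of (\ref{equa15})--(\ref{equa16}) follow at once because for any $x\in\mathcal N(\hat L)$ both nonlinear terms in the closed loop vanish, so $\dot x\equiv 0$ for $t\ge T$. The main obstacle is the bookkeeping of constants in the middle paragraph: the spectral factor $\lambda^{\star}$ must appear at the first power in the denominator of the settling-time bound as stated in (\ref{equ:26})--(\ref{equ:27}), whereas a naive composition of the power-mean and Rayleigh estimates leaves it at fractional powers, so matching the claimed expressions will require either a careful redistribution of these factors between the two exponents or a suitably normalized reformulation of $V$.
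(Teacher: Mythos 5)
Your proposal is correct and follows essentially the same route as the paper's proof: a Lyapunov function proportional to $\Phi_{\mathrm e}(x)=2x^{T}\hat Lx$, the power-mean inequalities of Lemma~\ref{lem6}, the Rayleigh-type bound $x^{T}\hat L^{2}x\ge\lambda^{\star}x^{T}\hat Lx$ with $\lambda^{\star}=\lambda_{2}(\hat L)$ or $\lambda_{1}(\hat L)$, and a scalar fixed-time comparison lemma. The constant-matching issue you flag at the end is resolved exactly by the normalization you anticipate: the paper substitutes $\Psi(x)=\sqrt{\lambda^{\star}\Phi_{\mathrm e}(x)/2}$, which turns the differential inequality into $\dot\Psi\le-k_{1}n^{\frac{r-m}{2r}}\lambda^{\star}\Psi^{m/r}-k_{2}\lambda^{\star}\Psi^{p/q}$, so that $\lambda^{\star}$ enters linearly and the scalar lemma yields (\ref{equ:26}) and (\ref{equ:27}) directly.
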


To exhibit this proof, we need the following two lemmas.

\begin{lemma}{\cite{Jia16}}\label{lem6}
Consider a series of nonnegative real numbers $\eta_1$, $\eta_2$, $\cdots$, $\eta_n$. The following two results hold.
\begin{enumerate}
\item If $\varepsilon>1$, then
\begin{equation*}
\sum_{i=1}^n\eta_i^\varepsilon\geq n^{1-\varepsilon}\left(\sum_{i=1}^n\eta_i\right)^\varepsilon.
\end{equation*}
\item If $0<\varepsilon\leq 1$, then
\begin{equation*}
\sum_{i=1}^n\eta_i^\varepsilon\geq \left( \sum_{i=1}^n\eta_i\right)^{\varepsilon}.
\end{equation*}
\end{enumerate}
\end{lemma}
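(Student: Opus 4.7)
The plan is to handle the two parts separately, since they rely on different convexity/concavity properties of the power function $f(t)=t^\varepsilon$ on $[0,\infty)$.

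For part (1), I would invoke Jensen's inequality for the convex function $f(t)=t^\varepsilon$ (convex exactly because $\varepsilon>1$). Writing $\bar{\eta}=\frac{1}{n}\sum_{i=1}^n\eta_i$, Jensen gives $\bar{\eta}^{\,\varepsilon}\le \frac{1}{n}\sum_{i=1}^n\eta_i^{\varepsilon}$, and multiplying through by $n$ and simplifying $\bar{\eta}^{\,\varepsilon}=n^{-\varepsilon}(\sum_i\eta_i)^\varepsilon$ yields the stated bound $\sum_{i=1}^n\eta_i^{\varepsilon}\ge n^{1-\varepsilon}(\sum_{i=1}^n\eta_i)^\varepsilon$. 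Equivalently, one could apply H\"older's inequality with conjugate exponents $\varepsilon$ and $\varepsilon/(\varepsilon-1)$ to $\sum_i \eta_i\cdot 1$; either route is a one-line derivation.

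For part (2), the idea is to reduce the claim to the subadditivity of $t\mapsto t^\varepsilon$ on $[0,\infty)$ and then induct on $n$. To establish subadditivity, I would prove that $(a+b)^\varepsilon\le a^\varepsilon+b^\varepsilon$ for all $a,b\ge 0$ and $0<\varepsilon\le 1$. If $a+b=0$ the claim is trivial; otherwise set $u=a/(a+b)$ and $v=b/(a+b)$, so $u,v\in[0,1]$ and $u+v=1$. Since $0<\varepsilon\le1$, the inequality $u^{\varepsilon-1}\ge 1$ holds for $u\in(0,1]$, giving $u^\varepsilon\ge u$; the same for $v$. Adding these yields $u^\varepsilon+v^\varepsilon\ge u+v=1$, and multiplying through by $(a+b)^\varepsilon$ produces the desired $a^\varepsilon+b^\varepsilon\ge(a+b)^\varepsilon$. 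A straightforward induction on $n$ then upgrades this to $(\sum_{i=1}^n\eta_i)^\varepsilon\le \sum_{i=1}^n\eta_i^\varepsilon$.

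There is no real obstacle here; both statements are classical consequences of Jensen/H\"older (for convex regime) and subadditivity (for concave regime). The only mildly delicate step is the subadditivity bound in part (2), and specifically the observation that $u\in[0,1]$ combined with $\varepsilon-1\le 0$ forces $u^{\varepsilon-1}\ge 1$; everything else is bookkeeping. Degenerate cases ($\eta_i=0$ for some or all $i$, or $\varepsilon=1$ in part (2) producing equality) can be absorbed with the standard convention $0^\varepsilon=0$.
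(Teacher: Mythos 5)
Your proof is correct. Note, however, that the paper itself offers no proof of this lemma: it is imported verbatim from \cite{Jia16} as a known auxiliary result, so there is no in-paper argument to compare against. Both of your steps are the standard ones: part (1) is exactly Jensen's inequality for the convex map $t\mapsto t^{\varepsilon}$ (or, equivalently, H\"older with exponents $\varepsilon$ and $\varepsilon/(\varepsilon-1)$), and part (2) follows from the subadditivity of $t\mapsto t^{\varepsilon}$ on $[0,\infty)$ for $0<\varepsilon\le 1$, which you establish correctly via the normalization $u=a/(a+b)$, $v=b/(a+b)$ and the observation that $u^{\varepsilon-1}\ge 1$ on $(0,1]$, followed by induction on $n$. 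The degenerate cases ($a+b=0$, some $\eta_i=0$, $\varepsilon=1$) are handled as you indicate. No gaps.
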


\begin{lemma}{\cite{Jia16}}
Consider a nonlinear system
\begin{equation*}
\dot{y}=-\alpha_1y^{\frac{m}{r}}-\alpha_2y^{\frac{p}{q}}
\label{lem11}
\end{equation*}

\noindent where $m$, $r$, $p$ and $q$ are positive odd integers satisfying $m>r$ and $q>p$. If $\alpha_1>0$ and $\alpha_2>0$, then for any initial state $y(0)$, $\lim_{t\rightarrow T}y(t)=0$ and $y(t)=0$, $\forall t\geq T$, where the setting time $T$ is given by
\begin{equation*}
T\leq \frac{1}{\alpha_1}\frac{r}{m-r}+\frac{1}{\alpha_2}\frac{q}{q-p}.
\end{equation*}
\end{lemma}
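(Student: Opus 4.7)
The plan is to reduce the statement to bounding two one-sided settling times for scalar monotone trajectories. First I would exploit the oddness built into the problem: since $m,r,p,q$ are all odd positive integers, both $y\mapsto y^{m/r}$ and $y\mapsto y^{p/q}$ are odd functions of $y$, so the vector field $-\alpha_{1}y^{m/r}-\alpha_{2}y^{p/q}$ is odd. Hence if $y(t)$ solves the ODE, so does $-y(t)$, and one may assume $y(0)\ge 0$ without loss of generality. For $y>0$ the right-hand side is strictly negative, so $y(\cdot)$ is monotonically decreasing and stays in $[0,y(0)]$; at $y=0$ the right-hand side vanishes, hence $y\equiv 0$ is an equilibrium that absorbs the trajectory once reached. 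The substantive task therefore reduces to bounding the first hitting time of zero from an arbitrary $y(0)>0$.

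Next I would split the descent into a ``large'' phase on $\{y\ge 1\}$ and a ``small'' phase on $\{y\le 1\}$, motivated by the observation that the exponent $m/r>1$ dominates when $y$ is large while $p/q<1$ is precisely what produces finite-time absorption at the origin. On the large phase one drops the second (nonnegative) term to get $\dot y\le -\alpha_{1}y^{m/r}$; separating variables gives $y^{-m/r}\,dy\le -\alpha_{1}\,dt$, and integrating from $y(0)\ge 1$ down to $y=1$ yields
\[
t_{1}\;\le\;\frac{1}{\alpha_{1}}\,\frac{r}{m-r}\bigl(1-y(0)^{-(m-r)/r}\bigr)\;\le\;\frac{1}{\alpha_{1}}\,\frac{r}{m-r}.
\]
If $y(0)\le 1$ the large phase is vacuous and the above is tighter than needed. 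On the small phase one instead drops the first term, obtaining $\dot y\le -\alpha_{2}y^{p/q}$; the analogous separation and integration from $y=1$ down to $y=0$ gives
\[
t_{2}\;\le\;\frac{1}{\alpha_{2}}\int_{0}^{1}u^{-p/q}\,du\;=\;\frac{1}{\alpha_{2}}\,\frac{q}{q-p},
\]
the improper integral being finite precisely because $p/q<1$. Summing the two phases yields $T\le t_{1}+t_{2}$, which matches the claimed bound, and the absorbing property $y(t)=0$ for all $t\ge T$ follows from the equilibrium observation in the first paragraph.

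The main technical obstacle is that the vector field fails to be Lipschitz at $y=0$, so the ODE does not have forward uniqueness through the origin in the classical sense; but this non-Lipschitzness is exactly what makes finite-time convergence possible, not an obstruction to the upper bound on $T$. Concretely, I would invoke a standard comparison argument to pass from the differential inequality $\dot y\le -\alpha_{i}y^{\cdot}$ to the integral bound on each phase, and extend the trajectory past the hitting time by $y\equiv 0$ using the equilibrium property. A minor point to check is that the inequality used in the large phase remains in force throughout the interval $[0,t_{1}]$ on which $y(t)\ge 1$, which is immediate from the nonnegativity of the dropped term there, and similarly on the small phase.
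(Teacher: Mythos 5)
The paper does not actually prove this lemma: it is imported verbatim from \cite{Jia16} as a known auxiliary result, so there is no in-paper argument to compare against. Your proof is correct and is essentially the standard derivation of such fixed-time bounds: reduce to $y(0)>0$ by oddness of the vector field, split the descent at $y=1$, keep only the dominant term on each phase, and integrate the resulting separable differential inequality; the integrals converge precisely because $m/r>1$ on the large phase and $p/q<1$ on the small phase, and their values reproduce the two summands $\frac{1}{\alpha_1}\frac{r}{m-r}$ and $\frac{1}{\alpha_2}\frac{q}{q-p}$ in the bound on $T$. One point you should state more carefully: because the field is not Lipschitz at the origin, the mere fact that $y=0$ is an equilibrium does not imply the trajectory stays there once it arrives (compare $\dot y=y^{1/3}$, whose zero solution is not forward-unique); what actually pins the state at $0$ is the sign structure you already established, namely $\dot y<0$ for $y>0$ and, by oddness, $\dot y>0$ for $y<0$, which forbids escape to either side after the hitting time. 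With that small clarification made explicit, your argument is complete and matches the proof technique used in the cited source.
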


\begin{proof}[Proof of Theorem \ref{thm3}]
We directly resort to the improved Laplacian potential function $\Phi_{\mathrm{e}}(x)$. Then the derivative of $\Phi_{\mathrm{e}}(x)$ along the trajectory of (\ref{eq1}) is given by
\begin{equation*}
\begin{split}
\dot{\Phi}_\mathrm{e}(x)=4\sum_{i=1}^{n}\left(\sum_{j=1}^{n}|\hat{a}_{ij}|(x_i-\mathrm{sgn}(\hat{a}_{ij})x_j)\right)u_i.
\end{split}
\end{equation*}

\noindent By using (\ref{equaa16}), one has
\begin{equation}\label{eq:24}
\begin{split}
\dot{\Phi}_\mathrm{e}(x)&=-4k_1\sum_{i=1}^{n}\left(\left(\sum_{j=1}^{n}|\hat{a}_{ij}|(x_i-\mathrm{sgn}(\hat{a}_{ij})x_j)\right)^2\right)^{\frac{m+r}{2r}}\\
&~~~-4k_2\sum_{i=1}^{n}\left(\left(\sum_{j=1}^{n}|\hat{a}_{ij}|(x_i-\mathrm{sgn}(\hat{a}_{ij})x_j)\right)^2\right)^{\frac{p+q}{2q}}.
\end{split}
\end{equation}

\noindent Due to $(m+r)/(2r)>1$, it follows from Lemma \ref{lem6} that
\begin{equation*}
\begin{split}
&\sum_{i=1}^{n}\left(\left(\sum_{j=1}^{n}|\hat{a}_{ij}|(x_i-\mathrm{sgn}(\hat{a}_{ij})x_j)\right)^2\right)^{\frac{m+r}{2r}}\\
&~~~~~~\geq n^{\frac{r-m}{2r}}\left(\sum_{i=1}^{n}\left(\sum_{j=1}^{n}|\hat{a}_{ij}|(x_i-\mathrm{sgn}(\hat{a}_{ij})x_j)\right)^2\right)^{\frac{m+r}{2r}}.
\end{split}
\end{equation*}

\noindent With $0<(p+q)/(2q)<1$, it is immediate to obtain
\begin{equation*}
\begin{split}
&\sum_{i=1}^{n}\left(\left(\sum_{j=1}^{n}|\hat{a}_{ij}|(x_i-\mathrm{sgn}(\hat{a}_{ij})x_j)\right)^2\right)^{\frac{p+q}{2q}}\\
&~~~~~~~~\geq \left(\sum_{i=1}^{n}\left(\sum_{j=1}^{n}|\hat{a}_{ij}|(x_i-\mathrm{sgn}(\hat{a}_{ij})x_j)\right)^2\right)^{\frac{p+q}{2q}}
\end{split}
\end{equation*}

\noindent from Lemma \ref{lem6}. Hence, the equation (\ref{eq:24}) can be written as
\begin{equation}\label{eq25}
\begin{split}
\dot{\Phi}_\mathrm{e}(x)
\leq -4k_1n^{\frac{r-m}{2r}}(x^T\hat{L}^T\hat{L}x)^{\frac{r+m}{2r}}-4k_2(x^T\hat{L}^T\hat{L}x)^{\frac{p+q}{2q}}.
\end{split}
\end{equation}

When the signed digraph $\mathcal{G}$ is structurally balanced, using \cite[Lemma 4]{Jia16} to (\ref{eq25}) can derive
\begin{equation}\label{eq29}
\begin{split}
\dot{\Phi}_\mathrm{e}(x)\leq &-4k_1n^{\frac{r-m}{2r}}\Big(\frac{\lambda_2(\hat{L})\Phi_\mathrm{e}(x)}{2}\Big)^{\frac{r+m}{2r}}\\
&~~~~~~~~~~~~~~~~~~~~~~~-4k_2\Big(\frac{\lambda_2(\hat{L})\Phi_\mathrm{e}(x)}{2}\Big)^{\frac{p+q}{2q}}.
\end{split}
\end{equation}

\noindent Define $\Psi(x)=\sqrt{\frac{\lambda_2(\hat{L})\Phi_\mathrm{e}(x)}{2}}$ and we can calculate
\begin{equation*}
\dot{\Psi}(x)=\frac{\lambda_2(\hat{L})\dot{\Phi}_\mathrm{e}(x)}{2\sqrt{2\lambda_2(\hat{L})\Phi_\mathrm{e}(x)}}.
\end{equation*}

\noindent Then, (\ref{eq29}) can be rewritten as
\begin{equation}\label{eq30}
\dot{\Psi}(x)\leq-k_1n^{\frac{r-m}{2r}}\lambda_2(\hat{L})\Psi(x)^{\frac{m}{r}}-k_2\lambda_2(\hat{L})\Psi(x)^{\frac{p}{q}}.
\end{equation}

\noindent Denote $z$ as the solution of the following equation
\begin{equation*}
\dot{z}=-k_1n^{\frac{r-m}{2r}}\lambda_2(\hat{L})z^{\frac{m}{r}}-k_2\lambda_2(\hat{L})z^{\frac{p}{q}}.
\end{equation*}

\noindent Based on Lemma \ref{lem11}, we have
\begin{equation*}
\lim_{t\rightarrow T}z(t)=0~\mbox{and}~z(t)=0,~\forall t\geq T
\end{equation*}

\noindent where the setting time $T$ satisfies (\ref{equ:26}). Using the comparison principle \cite[Lemma 3.4]{Khal} to (\ref{eq30}), we can develop $\Psi(x)\leq z$ that implies
\begin{equation*}
\lim_{t\rightarrow T}\Phi_\mathrm{e}(x)=0~\mbox{and}~\Phi_\mathrm{e}(x)=0,~\forall t\geq T
\end{equation*}

\noindent where $T$ is provided by (\ref{equ:26}). It hence follows from Lemma \ref{lem3} that the fixed-time bipartite consensus can be achieved.

When the signed digraph $\mathcal{G}$ is structurally unbalanced, we apply $x^T\hat{L}^T\hat{L}x\geq\lambda_1(\hat{L})V_1(x)$ to (\ref{eq25}) and can derive
\begin{equation*}
\begin{split}
\dot{\Phi}_\mathrm{e}(x)\leq &-4k_1n^{\frac{r-m}{2r}}\Big(\frac{\lambda_1(\hat{L})\Phi_\mathrm{e}(x)}{2}\Big)^{\frac{r+m}{2r}}\\
&~~~~~~~~~~~~~~~~~~~~~~-4k_2\Big(\frac{\lambda_1(\hat{L})\Phi_\mathrm{e}(x)}{2}\Big)^{\frac{p+q}{2q}}.
\end{split}
\end{equation*}

\noindent The rest of proof is similar to the proof of structurally balanced case, and thus its proof details are omitted for simplicity. The proof of Theorem \ref{thm3} is complete.
\end{proof}

\begin{remark}
Through Theorem \ref{thm3}, we provide an application of improved Laplacian potential functions, from which the fixed-time consensus issues can be solved for signed networks with directed topologies. This greatly enhances the existing fixed-time consensus results (see, e.g., \cite{Jia16}) that only consider signed networks under undirected communication topologies. In comparison with \cite{Zhao17,Wang18,Liu19,Lu20}, Theorem \ref{thm3} presents a new way to design fixed-time convergence protocols of directed signed networks, where the setting time in our established results has no relationships with the initial states of agents.
\end{remark}

\begin{remark}
It is worth noticing that the improved Laplacian potential function $\Phi_\mathrm{e}(x)$ can also be employed to solve finite-time consensus problems of directed signed networks. To be specific, let us present a distributed protocol given by
\begin{equation}\label{equa25}
\begin{split}
u_i&=-\mathrm{sgn}\left(\sum_{v_j\in N(i)}|\hat{a}_{ij}|(x_i-\mathrm{sgn}(\hat{a}_{ij})x_j)\right)\\
&~~~~\times \left|\sum_{v_j\in N(i)}|\hat{a}_{ij}|(x_i-\mathrm{sgn}(\hat{a}_{ij})x_j)\right|^\alpha,~~\forall i\in \mathcal{F}_n
\end{split}
\end{equation}

\noindent where $0<\alpha<1$. With the help of $\Phi_\mathrm{e}(x)$, we can develop that the distributed protocol (\ref{equa25}) guarantees the signed network (\ref{eq1}) to achieve the bipartite consensus (respectively, state stability) when the corresponding signed digraph is structurally balanced (respectively, unbalanced) within a finite time $T$. However, for (\ref{equa25}), the finite time $T$ relies on the initial states of agents. Like \cite{Jia16}, the corresponding proof of this finite-time convergence result can be obtained in the same way as that of Theorem \ref{thm3}.
\end{remark}

\section{Simulations}

In this section, we introduce two examples to illustrate the developed theoretical results. We employ two signed digraphs in Fig. \ref{tp1} to describe the interactions among agents. It is obvious from Fig. \ref{tp1} that $\mathcal{G}_a$ and $\mathcal{G}_b$ are both strongly connected and weight unbalanced.

\begin{figure}[H]
  \centering
  \includegraphics[width=8.5cm,clip]{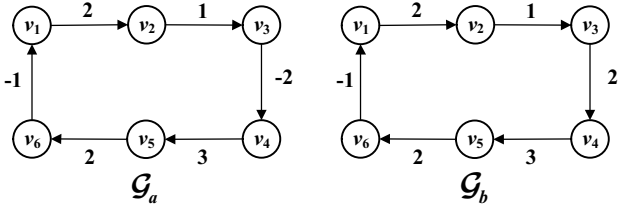}
  \caption{\small{Two signed digraphs $\mathcal{G}_a$ and $\mathcal{G}_b$. Left: $\mathcal{G}_a$ is structurally balanced. Right: $\mathcal{G}_b$ is structurally unbalanced.}}	
  \label{tp1}
\end{figure}

\begin{example}
The initial states of agents are provided by
\begin{equation*}
x(0)=\left[1,2,3,4,5,6\right]^T.
\end{equation*}

\noindent Consider the signed network (\ref{eq1}) under the signed digraph $\mathcal{G}_a$ in Fig. \ref{tp1}. Because $\mathcal{G}_a$ is structurally balanced, there exists a gauge transformation $D_6$ $=$ $\mathrm{diag}\{1$, $1$, $1$, $-1$, $-1$, $-1\}$. The signed network (\ref{eq1}) can achieve the signed-average consensus if the terminal states of agents satisfy
\begin{equation*}
\lim_{t\rightarrow \infty}x_i(t)\in\{1.5,-1.5\},~~~\forall i\in\mathcal{F}_{6}.
\end{equation*}

\begin{figure}[!htbp]
\centering
\includegraphics[width=8cm]{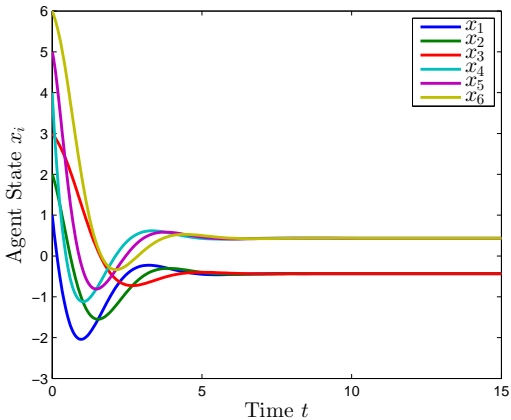}
\caption{\small Bipartite consensus of the signed network (\ref{eq1}) by employing the protocol (\ref{equ11}).}
\label{simi2}
\end{figure}

\begin{figure}[!htbp]
\centering
\includegraphics[width=8cm,clip]{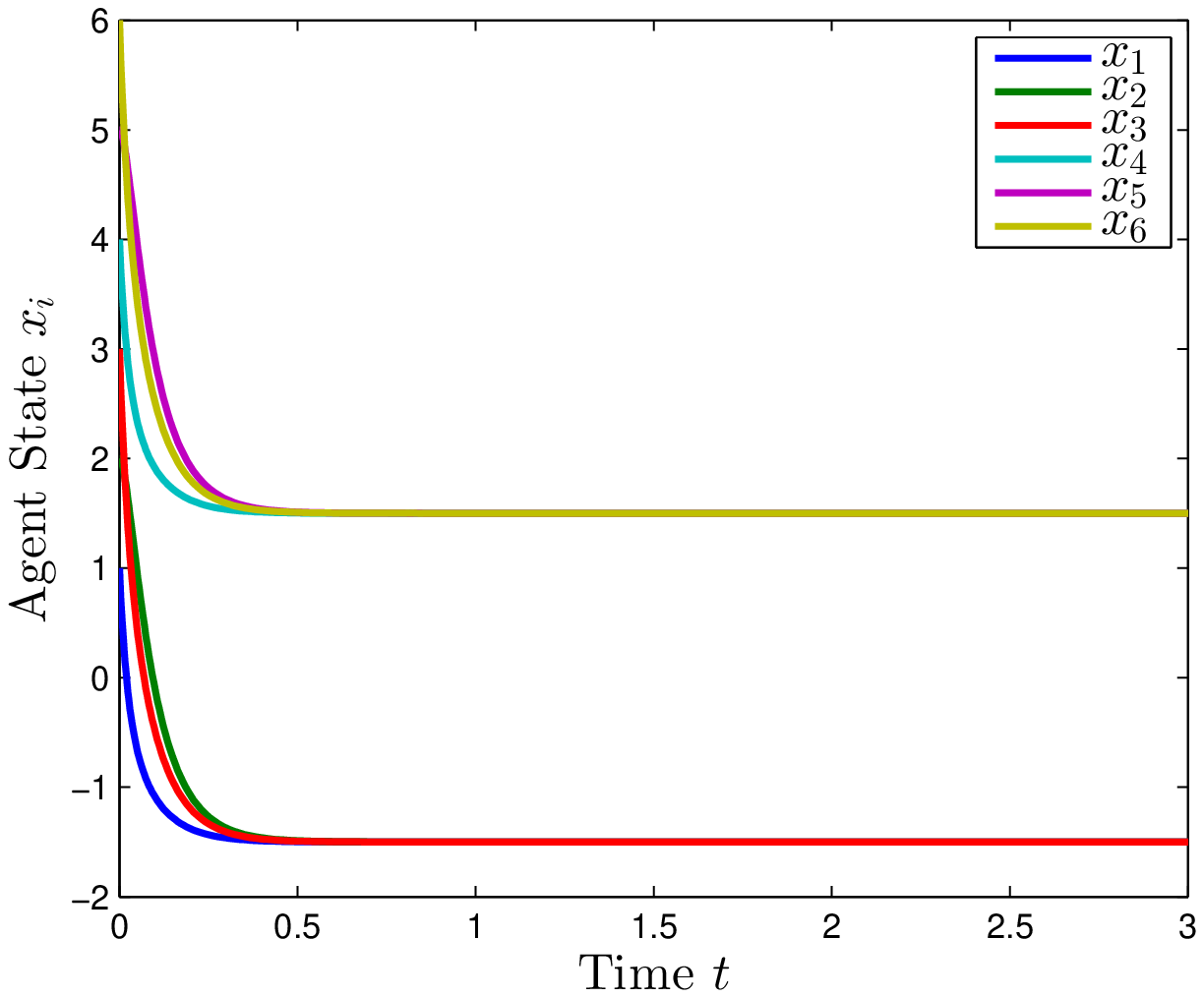}	
\centering
\includegraphics[width=8cm,clip]{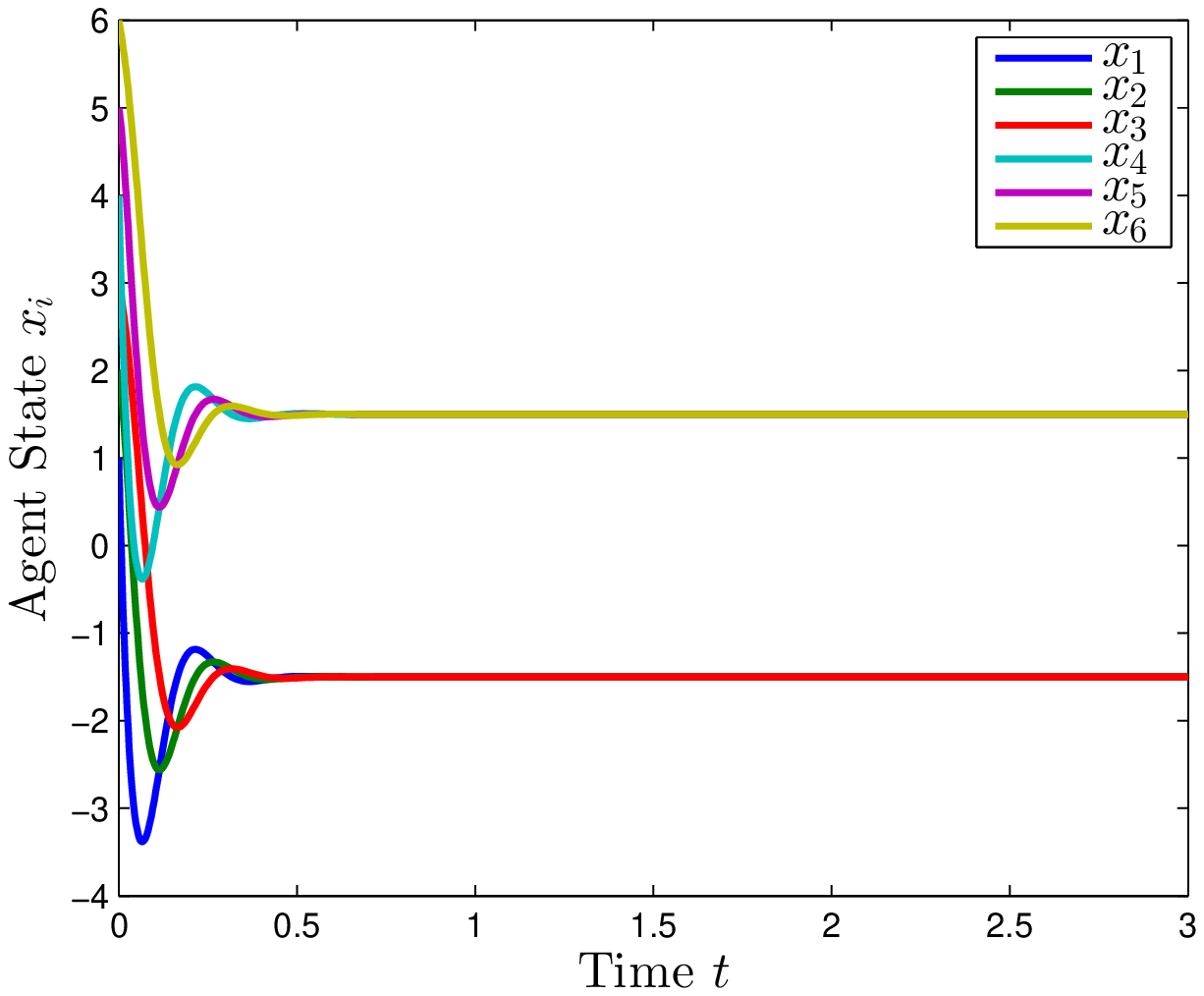}
\caption{\small Signed-average consensus of the signed network (\ref{eq1}). Upper: Under the protocol (\ref{equ7}). Lower: Under the protocol (\ref{eq10}).}	
\label{sim1}	
\end{figure}

By using the protocol (\ref{equ11}), we can plot the state evolution of the signed network (\ref{eq1}) in Fig. \ref{simi2}. This figure clearly depicts that the state $x_i$, $\forall i\in \mathcal{F}_6$ can reach the bipartite consensus with modulus value $0.4348$. Thus, the signed-average consensus can not be achieved with (\ref{equ11}). By applying the protocols (\ref{equ7}) and (\ref{eq10}) to the signed network (\ref{eq1}), the state evolutions of all agents are plotted in Fig. \ref{sim1}. From Fig. \ref{sim1}, we realize that the states of agents polarize with the polarized values $1.5$ and $-1.5$, which are consistent with the signed-average consensus results based on Theorems \ref{thm1} and \ref{thm2}.

Let the signed digraph $\mathcal{G}_b$ be the communication topology of the signed network (\ref{eq1}). Different from $\mathcal{G}_a$, $\mathcal{G}_b$ is structurally unbalanced. When the distributed protocols (\ref{equ11}), (\ref{equ7}) and (\ref{eq10}) are applied to the signed network (\ref{eq1}), the state evolutions of all agents can be plotted in Figs. \ref{simi4} and \ref{simi5}, respectively. We can easily see from Figs. \ref{simi4} and \ref{simi5} that all agents converge to zero. The simulation tests coincide with the state stability results of Theorems \ref{thm1} and \ref{thm2}.

\begin{figure}[!htbp]
\centering
\includegraphics[width=8cm]{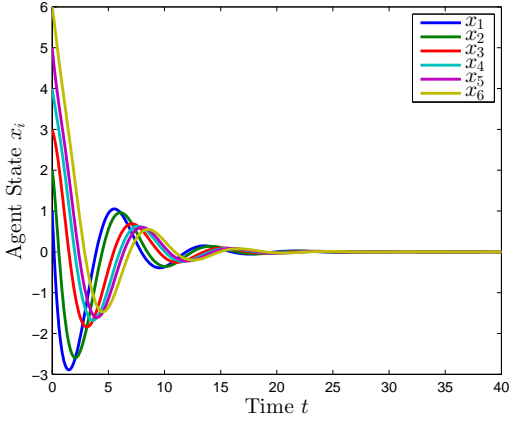}
\caption{\small State stability of the signed network (\ref{eq1}) by employing the protocol (\ref{equ11}).}.
\label{simi4}
\end{figure}

\begin{figure}[!htbp]
\centering
\includegraphics[width=8cm,clip]{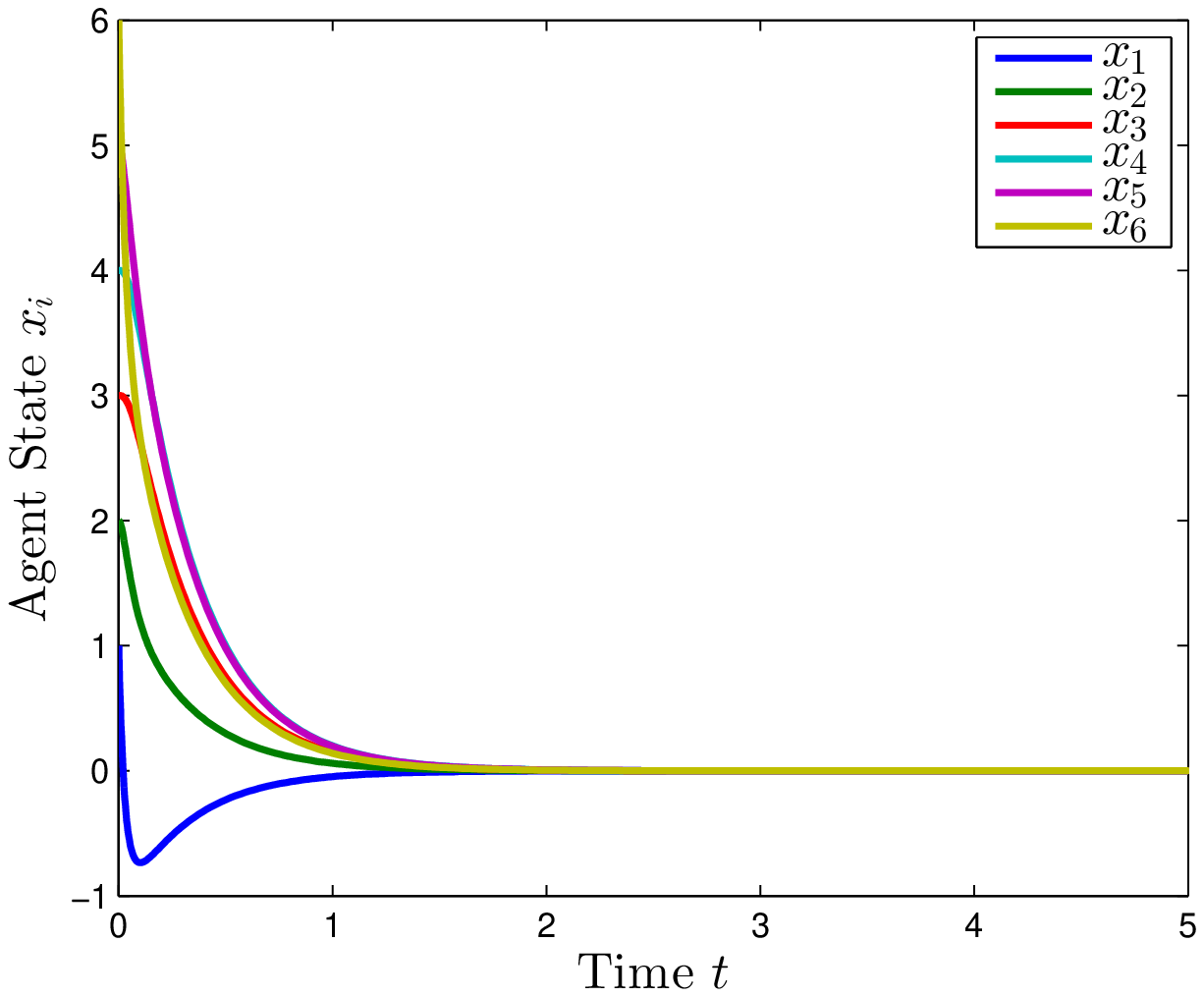}	
\centering
\includegraphics[width=8cm,clip]{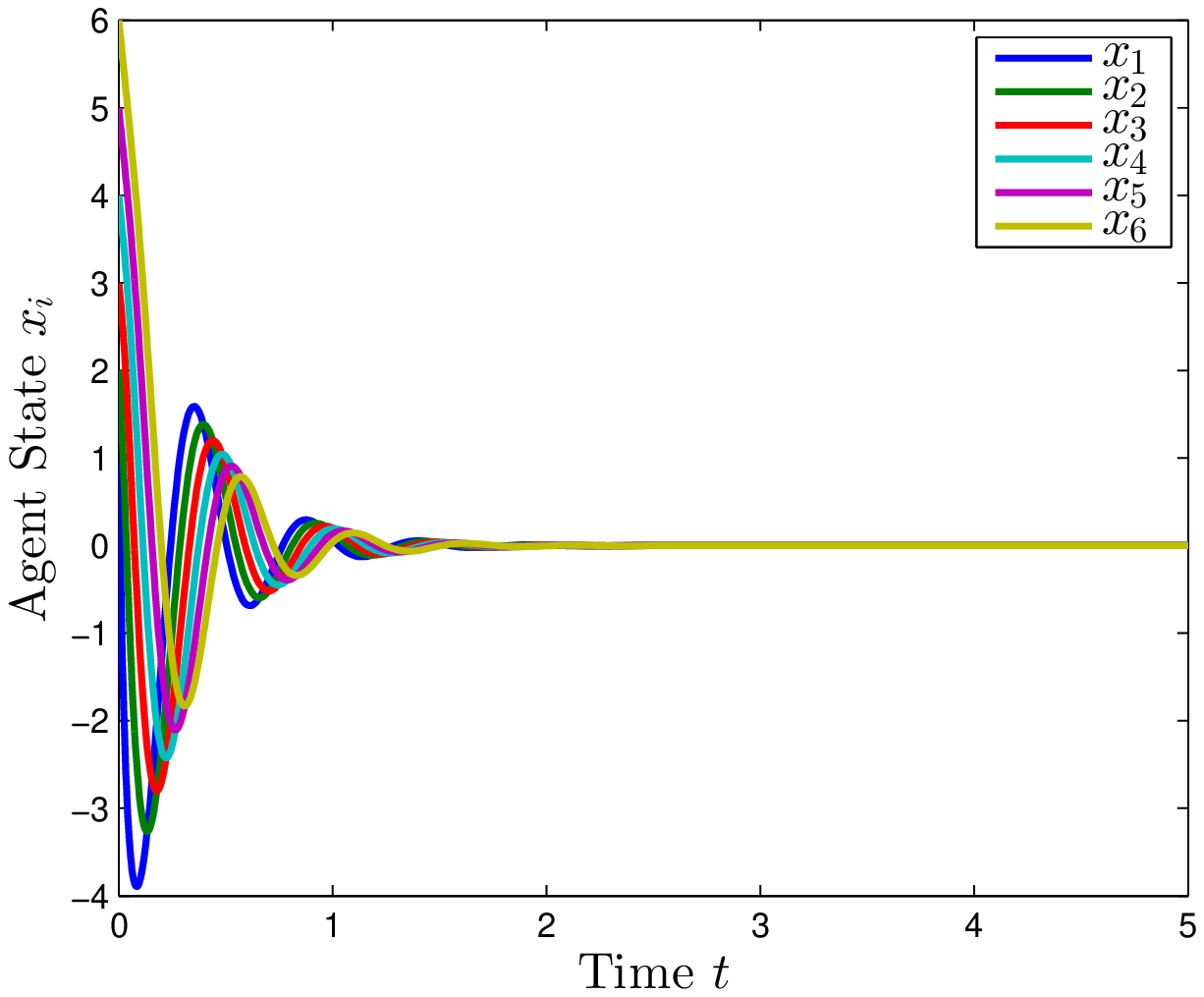}
\caption{\small State stability of the signed network (\ref{eq1}). Upper: Under the protocol (\ref{equ7}). Lower: Under the protocol (\ref{eq10}).}	
\label{simi5}	
\end{figure}
\end{example}

\begin{example}
We use the signed digraphs $\mathcal{G}_a$ and $\mathcal{G}_b$ to denote the communication topology of the signed network (\ref{eq1}). The initial states of agents are provided by
\begin{enumerate}
  \item $x(0)=[2,2,3,4,3,6]^T$
  \item $x(0)=[5,-10,15,-5,10,-15]^T$.
\end{enumerate}

\noindent The nonlinear distributed protocol (\ref{equaa16}) is applied with $k_1=k_2=1$, $m=9$, $r=7$, $p=3$ and $q=5$. The estimation of the setting time $T$ can be obtained from (\ref{equ:26}) and (\ref{equ:27}) (see Table \Rmnum{1} for more details).

\begin{table}[!htbp]
\caption{The setting time $T$ with different initial states}
\centering
\begin{tabular}{|c|c|c|}
\hline
$T$($\leq$)&$[2,2,3,4,3,6]^T$&$[5,-10,15,-5,10,-15]^T$\\
\hline
$\mathcal{G}_a$&$0.5850$&$0.5850$\\
\hline
$\mathcal{G}_b$&$2.1835$&$2.1835$\\
\hline
\end{tabular}
\end{table}

We can see from Table \Rmnum{1} that the setting time $T$ does not rely on the initial state of signed network (\ref{eq1}) regardless of whether the associated signed digraph is structurally balanced or not. To verify this observation, the dynamic behaviors of the signed network (\ref{eq1}) employing the protocol (\ref{equaa16}) are exhibited in Figs. \ref{sim4} and \ref{sim5}. This two figures clearly present that when the protocol (\ref{equaa16}) is applied, the signed network (\ref{eq1}) can achieve bipartite consensus under the signed digraph $\mathcal{G}_a$ and state stability under the signed digraph $\mathcal{G}_b$ within a fixed setting time $T$ that has no relationship with initial states of agents. The simulation results are in accord with Theorem \ref{thm3}.

\begin{figure}[!htbp]
\centering
\includegraphics[width=8cm]{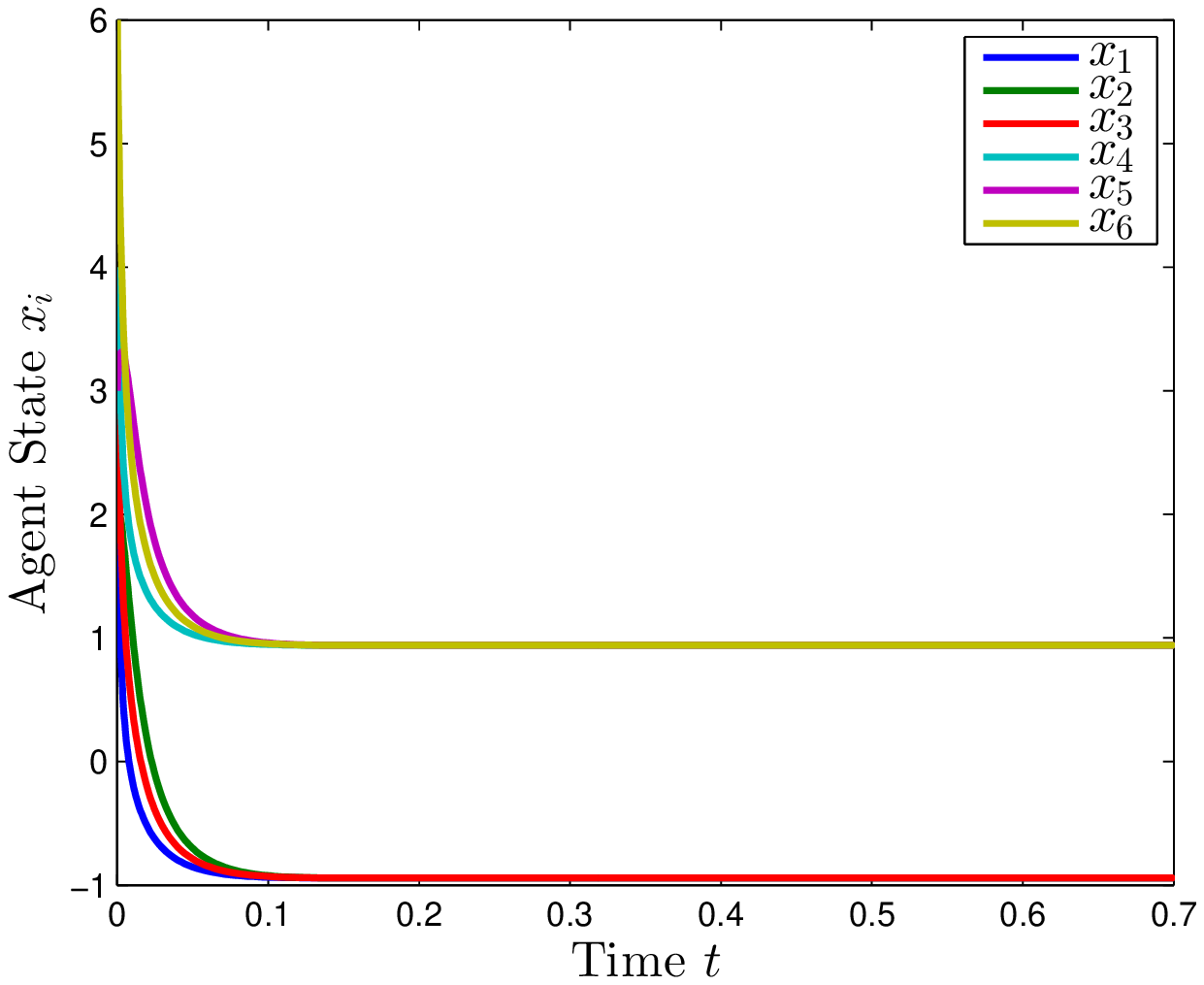}
\centering
\includegraphics[width=8cm]{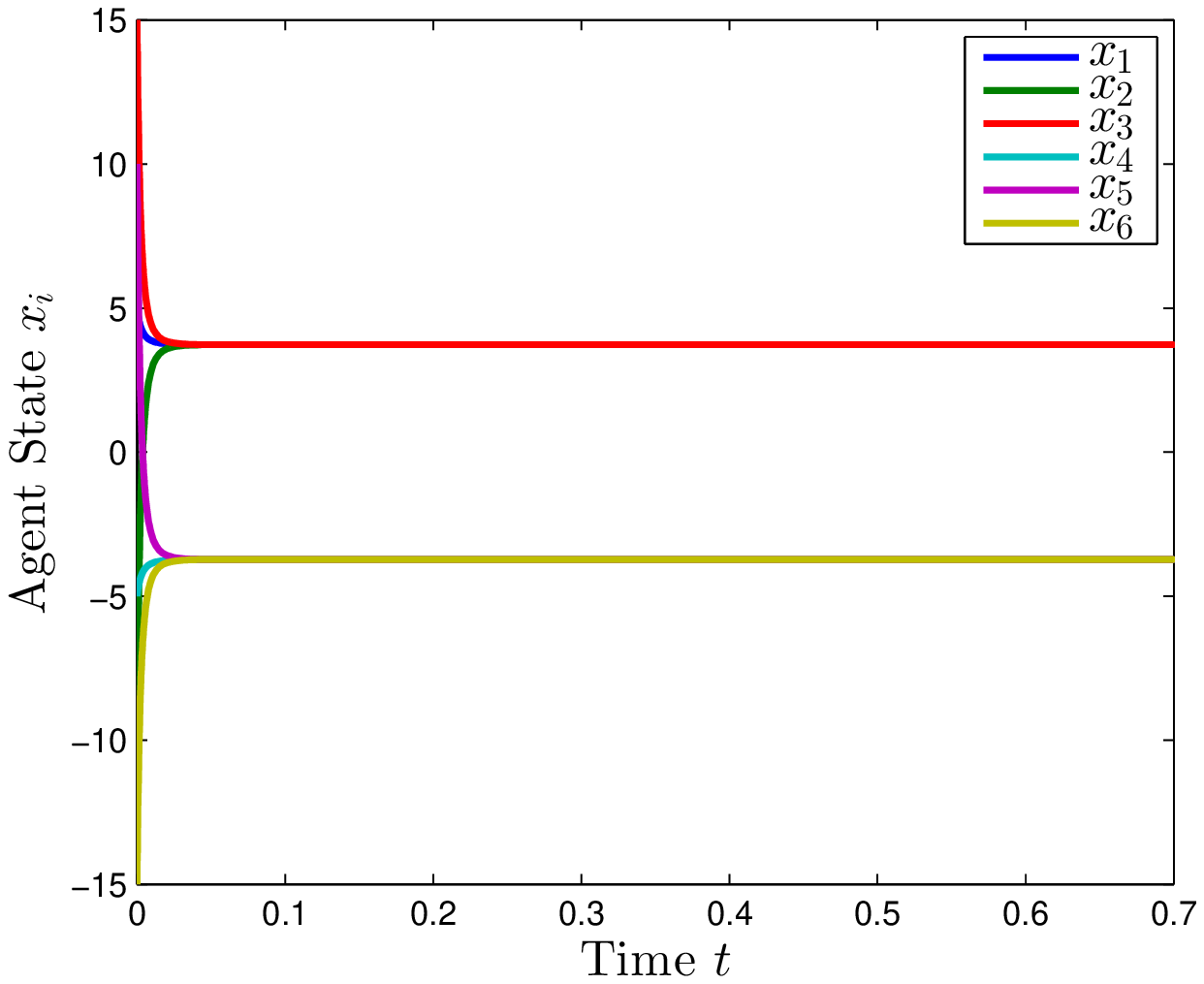}
\caption{\small Fixed-time bipartite consensus under the structurally balanced signed digraph $\mathcal{G}_a$. Upper: $x(0)=[2$, $2$, $3$, $4$, $3$, $6]^T$. Lower: $x(0)=[5$, $-10$, $15$, $-5$, $10$, $-15]^T$.}
\label{sim4}
\end{figure}

\begin{figure}[!htbp]
\centering
\includegraphics[width=8cm]{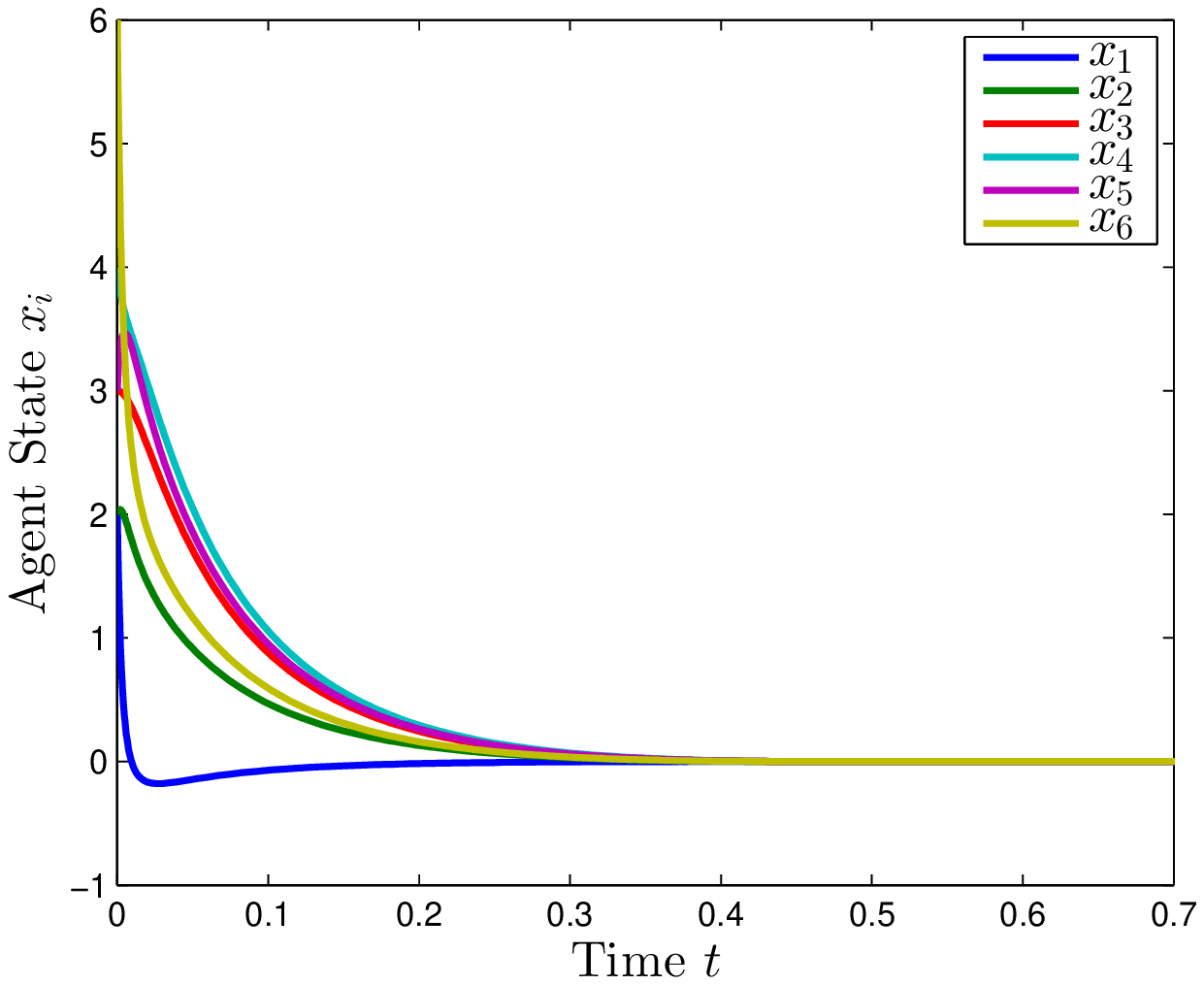}
\centering
\includegraphics[width=8cm]{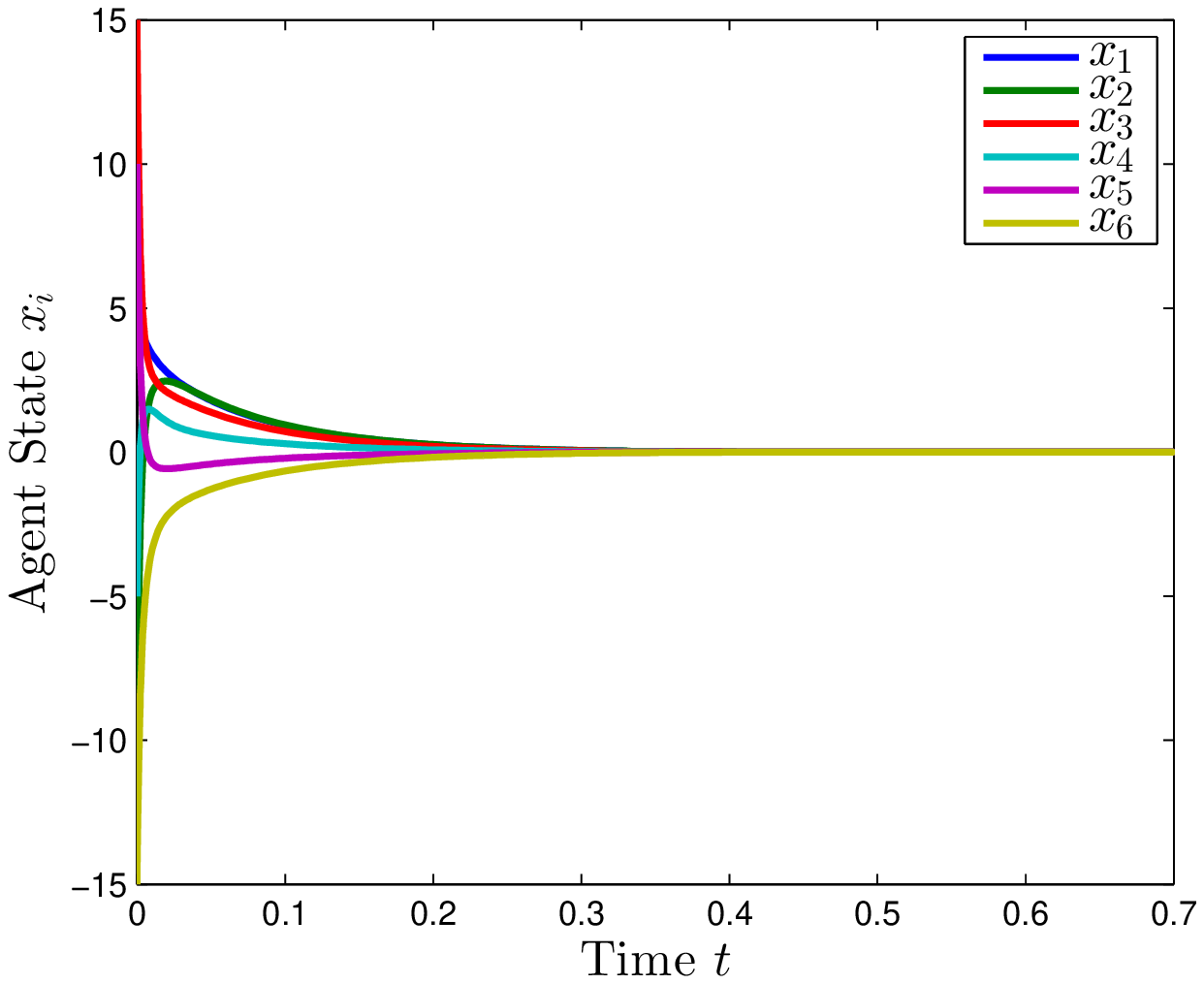}
\caption{\small Fixed-time state stability under the structurally unbalanced signed digraph $\mathcal{G}_b$. Upper: $x(0)=[2,2,3,4,3,6]^T$. Lower: $x(0)=[5,-10,15,-5,10,-15]^T$.}
\label{sim5}
\end{figure}
\end{example}

\section{Conclusions}

In this paper, we have investigated the distributed averaging problems of signed networks with general directed topologies. We have introduced the improved Laplacian potential function for signed networks, with which two distributed protocols are designed to make sure the signed-average consensus of signed networks no matter whether their associated signed digraphs are weight balanced or not. With improved Laplacian potential functions, we have given the convergence analyses for our two proposed protocols by employing the Lyapunov-based method, which gives a novel approach to exploring distributed control problems of directed signed networks. In addition, we have provided an application of improved Laplacian potential functions, in which the fixed-time bipartite consensus issues can be worked out for directed signed networks. Two simulation examples have been presented to validate the effectiveness of our developed results.

\section*{Appendix}

\subsection{Proof of Lemma \ref{lemma4}}\label{ap1}
\begin{proof}
Let $\hat{L}$ and $\hat{\Delta}$ $=$ $\mathrm{diag}\{\hat{\Delta}_{11}$, $\hat{\Delta}_{22}$, $\cdots$, $\hat{\Delta}_{nn}\}$ denote the Laplacian matrix and in-degree matrix of $\hat{\mathcal{G}}$, respectively. The diagonal element of $1/2(WL+L^TW)$ is given by
\begin{equation}\label{equ16}
\begin{split}
\tilde{\Delta}_{ii}=\frac{\det(\overline{L}_{ii})\sum_{j=1}^n|a_{ij}|+\det(\overline{L}_{ii})\sum_{j=1}^n|a_{ij}|}{2},~\forall i\in \mathcal{F}_n.
\end{split}
\end{equation}

\noindent Define a vector $w=[\det(\overline{L}_{11})$, $\det(\overline{L}_{22})$, $\cdots$, $\det(\overline{L}_{nn})]^T\in \mathbb{R}^{n}$ and $w$ satisfies $w^T\overline{L}=0_n^T$ from \cite{Li09}. We thus can induce
\begin{equation}\label{equ17}
\det(\overline{L}_{ii})\sum_{j=1}^n|a_{ij}|=\sum_{j=1}^n\det(\overline{L}_{jj})|a_{ji}|,~\forall i\in \mathcal{F}_n.
\end{equation}

\noindent With (\ref{equ17}), the equation (\ref{equ16}) can be rewritten as
\begin{equation*}
\begin{split}
\tilde{\Delta}_{ii}&=\frac{\det(\overline{L}_{ii})\sum_{j=1}^n|a_{ij}|+\sum_{j=1}^n\det(\overline{L}_{jj})|a_{ji}|}{2}\\
                   &=\hat{\Delta}_{ii}
\end{split}
\end{equation*}

\noindent Let $\tilde{\Delta}$ $=$ $\mathrm{diag}\{\tilde{\Delta}_{11}$, $\tilde{\Delta}_{22}$, $\cdots$, $\tilde{\Delta}_{nn}\}$. It can further derive
\begin{equation*}
\frac{1}{2}(WL+L^TW)=\tilde{\Delta}-\frac{WA+A^TW}{2}=\hat{\Delta}-\hat{A}=\hat{L}
\end{equation*}

\noindent which satisfies the definition of the Laplacian matrix. Hence, $1/2(WL+L^TW)$ is a valid Laplacian matrix of $\hat{\mathcal{G}}$.
\end{proof}

\subsection{Proof of Lemma \ref{lemma3}}
\begin{proof}
1): Since the signed digraph $\mathcal{G}$ is strongly connected, it follows from \cite{Li09} that all elements  $\det(\overline{L}_{11})$, $\det(\overline{L}_{22})$, $\cdots$, $\det(\overline{L}_{nn})$ are positive real numbers. This, together with (\ref{eq:7}), ensures
\begin{equation}
a_{ij}\neq 0\Rightarrow \hat{a}_{ij}\neq 0~\mbox{and}~ \hat{a}_{ji}\neq 0,~\forall i,j\in \mathcal{F}_n
\end{equation}

\noindent which causes that the mirror signed graph $\hat{\mathcal{G}}$ is connected due to the strong connectivity of $\mathcal{G}$.

2) According to the result 1) of Lemma \ref{lemma3}, the mirror signed graph $\hat{\mathcal{G}}$ of $\mathcal{G}$ is connected and undirected. With (\ref{eq:7}), we have
\begin{equation}\label{equ19}
\left\{\begin{split}
&a_{ij}>0~\mbox{or}~a_{ji}>0~\Leftrightarrow~\hat{a}_{ij}>0~\mbox{and}~ \hat{a}_{ji}>0\\
&a_{ij}<0~\mbox{or}~a_{ji}<0~\Leftrightarrow~\hat{a}_{ij}<0~\mbox{and}~ \hat{a}_{ji}<0
\end{split}
\right.,~\forall i,j\in \mathcal{F}_n.
\end{equation}

\noindent From \cite{Bein78}, we know that $\mathcal{G}$ is structurally balanced if and only if all semi-cycles of $\mathcal{G}$ are positive. By (\ref{equ19}), we can derive that all semi-cycles of $\hat{\mathcal{G}}$ are positive if and only if all semi-cycles of $\mathcal{G}$ are positive. Therefore, $\hat{\mathcal{G}}$ is structurally balanced if and only if $\mathcal{G}$ is structurally balanced.

Since structural balance and unbalance are mutually exclusive properties, we can directly develop that $\hat{\mathcal{G}}$ is structurally unbalanced if and only if $\mathcal{G}$ is structurally unbalanced.
\end{proof}

\subsection{Proof of Lemma \ref{lemma5}}
\begin{proof}
1) ``$\Leftarrow$": Based on \cite{Alta13}, we can develop $\mathcal{N}(L)=\mathrm{span}\{D_n1_n\}$ when $\mathcal{G}$ is structurally balanced, where $D_n\in \mathcal{D}$ satisfies $\overline{L}=D_nLD_n$. On one hand, for $\forall x\in\mathcal{N}(L)$, we have $x^T\hat{L}x=x^T\frac{WL+L^TW}{2}x=0_n$ that indicates $x\in \mathcal{N}\left(\hat{L}\right)$. We thus can obtain $\mathcal{N}(L)\subseteq \mathcal{N}\left(\hat{L}\right)$. On the other hand, since $\mathcal{G}$ is structurally balanced and strongly connected, a direct consequence of Lemma \ref{lemma3} is that $\hat{\mathcal{G}}$ is also structurally balanced and connected. It follows from Lemma \ref{lemma4} and \cite[Lemma 1]{Alta13} that $\mathrm{rank}\left(\hat{L}\right)=n-1$ holds. Hence, we can deduce $\mathcal{N}\left(\hat{L}\right)=\mathcal{N}(L)=\mathrm{span}\{D_n1_n\}$.

``$\Rightarrow$": Because of $\mathcal{N}\left(\hat{L}\right)=\mathcal{N}(L)=\mathrm{span}\{D_n1_n\}$, it can induce $LD_n1_n=0_n$ and $\mathrm{rank}(L)=n-1$. We can further derive $D_nLD_n1_n=\overline{L}1_n=0_n$ that implies $D_nLD_n=\overline{L}$. It is immediate to develop that $\mathcal{G}$ is structurally balanced from \cite[Theorem 4.1]{Meng18}.

2) ``$\Leftarrow$": Since $\mathcal{G}$ is structurally unbalanced, all eigenvalues of $L$ and $\hat{L}$ have positive real parts. Therefore, $\mathcal{N}\left(\hat{L}\right)=\mathcal{N}(L)=0_n$ holds.

``$\Rightarrow$": Owing to $\mathcal{N}\left(\hat{L}\right)=\mathcal{N}(L)=0_n$, we can realize that the equation $Lx=0_n$ have no non-zero solution and thus obtain $\mathrm{rank}(L)=n$. It follows from \cite[Corollary 3]{Alta13} that $\mathcal{G}$ is structurally unbalanced.
\end{proof}

\subsection{Proof of Lemma \ref{lem3}}

\begin{proof}
For the first statement, we can calculate
\begin{equation}\label{eq6}
\begin{split}
x^TWLx&=\sum_{i=1}^n\det(\overline{L}_{ii})x_i\sum_{j=1}^n|a_{ij}|(x_i-\mathrm{sgn}(a_{ij})x_j)\\
&=\sum_{i=1}^n\sum_{j=1}^n\det(\overline{L}_{ii})|a_{ij}|x_i(x_i-\mathrm{sgn}(a_{ij})x_j).
\end{split}
\end{equation}

\noindent By employing (\ref{equ17}), we can further deduce
\begin{equation}\label{eq7}
\begin{split}
x^TWLx&=\sum_{i=1}^n\det(\overline{L}_{ii})x_i\sum_{j=1}^n|a_{ij}|(x_i-\mathrm{sgn}(a_{ij})x_j)\\
&=\sum_{i=1}^n\det(\overline{L}_{ii})x_i^2\sum_{j=1}^n|a_{ij}|\\
&~~~~~~~~~~~-\sum_{i=1}^n\sum_{j=1}^n\det(\overline{L}_{ii})|a_{ij}|\mathrm{sgn}(a_{ij})x_ix_j\\
&=\sum_{i=1}^nx_i^2\sum_{j=1}^n\det(\overline{L}_{jj})|a_{ji}|\\
&~~~~~~~~~~~-\sum_{i=1}^n\sum_{j=1}^n\det(\overline{L}_{ii})|a_{ij}|\mathrm{sgn}(a_{ij})x_ix_j\\
&=\sum_{j=1}^nx_j^2\sum_{i=1}^n\det(\overline{L}_{ii})|a_{ij}|\\
&~~~~~~~~~~~-\sum_{i=1}^n\sum_{j=1}^n\det(\overline{L}_{ii})|a_{ij}|\mathrm{sgn}(a_{ij})x_ix_j\\
&=\sum_{i=1}^n\sum_{j=1}^n\det(\overline{L}_{ii})|a_{ij}|x_j(x_j-\mathrm{sgn}(a_{ij})x_i).
\end{split}
\end{equation}

\noindent With (\ref{eq6}) and (\ref{eq7}), we have
\begin{equation*}
\begin{split}
x^T&(WL+L^TW)x=2x^TWLx\\
&=\left\{\sum_{i=1}^n\sum_{j=1}^n\det(\overline{L}_{ii})|a_{ij}|x_i(x_i-\mathrm{sgn}(a_{ij})x_j)\right.\\
&~~~\left.+\sum_{i=1}^n\sum_{j=1}^n\det(\overline{L}_{ii})|a_{ij}|x_j(x_j-\mathrm{sgn}(a_{ij})x_i)\right\}\\
&=\sum_{i=1}^n\sum_{j=1}^{n}\det(\overline{L}_{ii})|a_{ij}|(x_i-\mathrm{sgn}(a_{ij})x_j)^2=\Phi_\mathrm{e}(x).
\end{split}
\end{equation*}

\noindent Hence, the equation (\ref{equ:5}) holds. Based on (\ref{equ:5}), the remaining of this proof can be immediately derived from Lemmas 1-3.
\end{proof}

\end{document}